\documentclass[preprint,10pt]{elsarticle}
\usepackage{amssymb}
\usepackage{pifont}
\usepackage{amsfonts}
\usepackage{amsmath}
\usepackage{graphicx}
\usepackage{latexsym,amsthm}
\usepackage{hyperref}
\usepackage{combelow}
\usepackage{setspace}
\usepackage[left=1.3cm,top=1.75cm,right=1.3cm]{geometry}
\makeatletter
\def\ps@pprintTitle{%
  \let\@oddhead\@empty
  \let\@evenhead\@empty
  \def\@oddfoot{\reset@font\hfil\thepage\hfil}
  \let\@evenfoot\@oddfoot
}
\makeatother
\allowdisplaybreaks

\newtheorem{lemma}{Lemma}

\newtheorem{theorem}{Theorem}
\numberwithin{equation}{section}

\begin{document}

\begin{frontmatter}
\title{On Chlodowsky variant of $(p,q)$ Kantorovich-Stancu-Schurer operators}
\author[label1,label2]{Vishnu Narayan Mishra}
\ead{vishnunarayanmishra@gmail.com,
vishnu\_narayanmishra@yahoo.co.in}
\address[label1]{Department of Applied Mathematics \& Humanities,
Sardar Vallabhbhai National Institute of Technology, Ichchhanath Mahadev Dumas Road, Surat -395 007 (Gujarat), India}
\address[label2]{L. 1627 Awadh Puri Colony Beniganj,
Phase -III, Opp. Industrial Training Institute, Ayodhya Main Road, Faizabad-224 001,
(Uttar Pradesh), India } 
\author[label1, label*]{Shikha Pandey}
\ead{sp1486@gmail.com}
\fntext[label*]{Corresponding author}
\begin{abstract}
In the present paper, we introduce the Chlodowsky variant of $(p,q)$ Kantorovich-Stancu-Schurer operators on the unbounded domain which is a generalization of $(p,q)$ Bernstein-Stancu-Kantorovich operators. We have also derived its Korovkin type approximation properties and rate of convergence.
\end{abstract}
\begin{keyword}
$(p,q)$-integers, $(p,q)$ Bernstein operators, Chlodowsky polynomials, $(p,q)$ Bernstein-Kantorovich operators, modulus of continuity, linear positive operator, Korovkin type approximation theorem, rate of convergence.\\
\textit{$2000$ Mathematics Subject Classification:} Primary $41A25$, $41A36$, $41A10$, $41A30$.
\end{keyword}
\end{frontmatter}
\section{Introduction and preliminaries}
Approximation theory has an important role in mathematical research because of its great potential for applications. Korovkin gave his famous approximation theorem in 1950, since then  the study of the linear methods of approximation given by sequences of positive and linear operators became a deep-rooted part of approximation theory. Considering it, various operators as Bernstein, Sz\'{a}sz, Baskakov etc. and their generalizations are being studied. In recent years, many results about the generalization of positive linear operators have been obtained by several mathematicians (\cite{Gairola},\cite{mishra1}-\cite{mishra6},\cite{BaskontoStancu}). In last two decades, the applications of $q$-calculus has played an important role in the area of approximation theory, number theory and theoretical physics. In 1987, Lupa\cb s  and in 1997, Phillips introduced a sequence of Bernstein polynomials based on $q$-integers and investigated its approximation properties. Several researchers obtained various other generalizations of operators based on $q$-calculus(See \cite{HM4},\cite{HM1}-\cite{HM3}).\\
Recently, Mursaleen et al. applied $(p, q)$-calculus in approximation theory
and introduced first $(p, q)$-analogue of Bernstein operators. They investigated uniform convergence of the operators and order of convergence, obtained Voronovskaja
theorem as well. Also, $(p, q)$-analogue of Bernstein operators, Bernstein-Stancu operators and Bernstein-Schurer operators, Kontorovich Bernstein Schurer, and Bleimann-Butzer-Hahn operators were introduced in (\cite{bernpq}-\cite{blehahn}),
respectively. Further, T. Acar \cite{acar} have studied recently, $(p, q)$-Generalization of Sz\'{a}sz–Mirakyan operators.\\
In the present paper, we introduce the Chlodowsky variant of $(p,q)$ Kantorovich-Stancu-Schurer operators on the unbounded domain.
We begin by recalling certain notations of $(p, q)$-calculus.\\
For $0<q<p \leq 1$, the $(p,q)$ integer $[n]_{p,q}$ is defined by
\begin{equation*}
[n]_{p,q} := \dfrac{p^n-q^n}{p-q}. 
\end{equation*}
$(p,q)$ factorial is expressed as
\begin{equation*}
[n]_{p,q}!=[n]_{p,q} [n-1]_{p,q} [n-2]_{p,q} \ldots 1.
\end{equation*}
$(p,q)$ binomial coefficient is expressed as
\begin{equation*}
\left[\begin{array}{c} n \\ k \end{array} \right]_{p,q} := \dfrac{[n]_{p,q}!}{[k]_{p,q}![n-k]_{p,q}!}.
\end{equation*}
$(p,q)$ binomial expansion as
\begin{equation*}
(ax+by)_{p,q}^n:=\sum_{k=0}^n\left[\begin{array}{c} n \\ k \end{array} \right]_{p,q}a^{n-k}b^k x^{n-k}y^k.
\end{equation*}
$$(x+y)_{p,q}^n := (x+y) (px+qy) (p^2x+q^2y)\ldots (p^{n-1}x+q^{n-1}y).$$
The definite integrals of the function $f$ are given by
$$\int_0^a f(x) d_{p,q}x =(q-p)a \sum_{k=0}^\infty \dfrac{p^k}{q^{k+1}}f\left(\dfrac{p^k}{q^{k+1}}a\right), ~~~~\left|\dfrac{p}{q}\right|<1,$$
and
$$\int_0^a f(x) d_{p,q}x= (p-q)a \sum_{k=0}^\infty \dfrac{q^k}{p^{k+1}}f\left(\dfrac{q^k}{p^{k+1}}a\right), ~~~~\left|\dfrac{p}{q}\right|>1.$$
Further $(p,q)$ analysis can be found in \cite{pqdiff}.\\
In 1932, Chlodowsky \cite{Ibikli} presented a generalization of Bernstein polynomials on an unbounded set, known as Bernstein-Chlodowsky polynomials given by,
\begin{equation}
B_n(f,x)=\sum_{k=0}^n f(\dfrac{k}{n}b_n)\left(\begin{array}{c} n \\ k \end{array} \right) \left(\dfrac{x}{b_n}\right)^k \left(1-\dfrac{x}{b_n}\right)^{n-k}, ~~~~~ 0\leq x\leq b_n,
\end{equation}
where $b_n$ is  an increasing sequence of positive terms with the
properties $b_n \rightarrow \infty$ and $\frac{b_n}{n} \rightarrow 0$ as $n \rightarrow \infty.$\vspace{.5 cm}\\
In 2015, Vedi and \"{O}zarslan \cite{bskq} investigated Chlodowsky-type
$q$-Bernstein-Stancu-Kantorovich operators,
and Wafi and Rao investigated $(p,q)$ form of Kantorovich type Bernstein-Stancu-Schurer operator. Mursaleen and Khan \cite{bernpqkonto} defined the Kantorovich type $(p, q)$-Bernstein-Schurer Operators, given by
\begin{eqnarray*}
&& T_{n,m}(f;x,p,q)=\sum_{k=0}^{n+m} \left[\begin{array}{c} n+m \\ k \end{array} \right]_{p,q} x^k \prod_{s=0}^{n+m-k-1} (p^s-q^sx) \int_0^1 f \left(\dfrac{(1-t)[k]_{p,q}+[k+1]_{p,q}t}{[n+1]_{p,q}}\right)\mathop{d_{p,q}t} \\&& k=0,1,2,\ldots,n=1,2,3,\ldots
\end{eqnarray*}
where 
\begin{lemma} \label{L1}(See \cite{bernpqkonto}) For the Operators $T_{n,m}^{(\alpha,\beta)},$  we have
\begin{eqnarray*}
 && T_{n,m}(1;x,p,q)=1,\\&&
 T_{n,m}(t;x,p,q)=\dfrac{(px+1-x)_{p,q}^{n+m}}{[2]_{p,q}[n+1]_{p,q}}+\dfrac{(p+2q-1)[n+m]_{p,q}}{[2]_{p,q}[n+1]_{p,q}}x,\\&&
T_{n,m}(t^2;x,p,q)=\dfrac{(p^2x+1-x)_{p,q}^{n+m}}{[3]_{p,q}[n+1]_{p,q}^2}+\left\{ 1+\dfrac{2q}{[2]_{p,q}}+\dfrac{q^2-1}{[3]_{p,q}}\right\} \dfrac{[n+m]_{p,q}}{[n+1]_{p,q}^2}(px+1-x)_{p,q}^{n+m-1}x\\&&~~~~~~~~~~~~~~~~~~~~~~+\Biggl\{ 1+\dfrac{2(q-1)}{[2]_{p,q}}+\dfrac{(q-1)^2}{[3]_{p,q}}\Biggl\}\dfrac{[n+m]_{p,q}[n+m-1]_{p,q}}{[n+1]_{p,q}^2}x^2.
\end{eqnarray*}
\end{lemma}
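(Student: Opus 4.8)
The plan is to use the linearity of $T_{n,m}$ together with the elementary $(p,q)$-integrals $\int_0^1 t^{j}\,d_{p,q}t=\tfrac{1}{[j+1]_{p,q}}$ ($j=0,1,2$) and the moment identities for the underlying $(p,q)$-Bernstein--Schurer basis. Denote by $b_{n+m,k}(x)$ the coefficient multiplying $\int_0^1 f(\cdot)\,d_{p,q}t$ in the definition of $T_{n,m}$, so that $\sum_{k=0}^{n+m}b_{n+m,k}(x)=1$. The first step is to linearize the node: since $[k+1]_{p,q}=p^{k}+q[k]_{p,q}$,
\[
\frac{(1-t)[k]_{p,q}+[k+1]_{p,q}t}{[n+1]_{p,q}}=\frac{[k]_{p,q}\bigl(1+(q-1)t\bigr)+p^{k}t}{[n+1]_{p,q}},
\]
and hence, for $j=0,1,2$,
\[
T_{n,m}(t^{j};x,p,q)=\frac{1}{[n+1]_{p,q}^{j}}\sum_{k=0}^{n+m}b_{n+m,k}(x)\int_0^1\bigl([k]_{p,q}(1+(q-1)t)+p^{k}t\bigr)^{j}\,d_{p,q}t .
\]

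For $j=0$ this is just $\sum_k b_{n+m,k}(x)=1$. For $j=1$, the identities $\int_0^1(1+(q-1)t)\,d_{p,q}t=1+\tfrac{q-1}{[2]_{p,q}}=\tfrac{p+2q-1}{[2]_{p,q}}$ and $\int_0^1 t\,d_{p,q}t=\tfrac{1}{[2]_{p,q}}$ give
\[
T_{n,m}(t;x,p,q)=\frac{1}{[2]_{p,q}[n+1]_{p,q}}\Bigl((p+2q-1)\sum_k[k]_{p,q}b_{n+m,k}(x)+\sum_k p^{k}b_{n+m,k}(x)\Bigr),
\]
so that substituting the basis moments $\sum_k[k]_{p,q}b_{n+m,k}(x)=[n+m]_{p,q}x$ and $\sum_k p^{k}b_{n+m,k}(x)=(px+1-x)_{p,q}^{n+m}$ yields the claimed expression. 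For $j=2$ one squares the linearized node and integrates the three pieces, using
\[
\int_0^1(1+(q-1)t)^{2}\,d_{p,q}t=1+\frac{2(q-1)}{[2]_{p,q}}+\frac{(q-1)^{2}}{[3]_{p,q}},\qquad \int_0^1 t(1+(q-1)t)\,d_{p,q}t=\frac{1}{[2]_{p,q}}+\frac{q-1}{[3]_{p,q}},
\]
and $\int_0^1 t^{2}\,d_{p,q}t=\tfrac{1}{[3]_{p,q}}$; the first of these is precisely the bracket multiplying $x^{2}$ in the statement. One is then left with $[k]_{p,q}^{2}$-, $p^{k}[k]_{p,q}$- and $p^{2k}$-weighted sums of $b_{n+m,k}(x)$; writing $[k]_{p,q}^{2}=p^{k-1}[k]_{p,q}+q[k]_{p,q}[k-1]_{p,q}$ and invoking the basis moments $\sum_k[k]_{p,q}[k-1]_{p,q}b_{n+m,k}(x)$, $\sum_k p^{2k}b_{n+m,k}(x)=(p^{2}x+1-x)_{p,q}^{n+m}$ and $\sum_k p^{k}[k]_{p,q}b_{n+m,k}(x)$ from \cite{bernpqkonto}, then collecting, one obtains the three summands of $T_{n,m}(t^{2};x,p,q)$, the mixed bracket $1+\tfrac{2q}{[2]_{p,q}}+\tfrac{q^{2}-1}{[3]_{p,q}}$ emerging as the combined coefficient of the $(px+1-x)_{p,q}^{n+m-1}x$ term after the regrouping.

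The basis moment identities invoked above are those established for the $(p,q)$-Bernstein--Schurer operators in \cite{bernpqkonto}; they follow from the $(p,q)$-binomial theorem — each sum $\sum_k p^{jk}b_{n+m,k}(x)$ telescopes to $(p^{j}x+1-x)_{p,q}^{n+m}$ — together with the recurrence $[k]_{p,q}=p^{k-1}+q[k-1]_{p,q}$ and differentiation in $x$ (equivalently, the $(p,q)$-Pascal rule for $b_{n+m,k}$), which deliver the $[k]_{p,q}$- and $[k]_{p,q}[k-1]_{p,q}$-weighted versions. Since Lemma~\ref{L1} is quoted from \cite{bernpqkonto}, these identities may simply be cited.

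I expect the only real difficulty to be the $j=2$ bookkeeping: one must keep precise track of the mixed sums $\sum_k p^{jk}[k]_{p,q}^{i}b_{n+m,k}(x)$ — which are not the classical Bernstein moments, since the factors $p^{jk}$ interact with the products $\prod_{s}(p^{s}-q^{s}x)$ hidden inside $b_{n+m,k}$ — and then check that, after the regrouping induced by $[k]_{p,q}^{2}=p^{k-1}[k]_{p,q}+q[k]_{p,q}[k-1]_{p,q}$, the polynomial in $x$ that appears agrees with the compact products $(p^{2}x+1-x)_{p,q}^{n+m}$ and $(px+1-x)_{p,q}^{n+m-1}x$. A convenient safeguard is to verify the three formulas directly for $n+m=1$ and $n+m=2$; alternatively, the whole lemma can be proved by induction on $n+m$ using the Pascal-type recurrence for $b_{n+m,k}$, which avoids having to record closed forms for all the mixed moments.
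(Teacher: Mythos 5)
The paper offers no proof of Lemma~\ref{L1} at all --- it is imported verbatim from \cite{bernpqkonto} --- so your derivation supplies an argument where the paper only cites. Your outline is the standard one and is, in substance, the computation performed in that reference: linearize the Kantorovich node via $[k+1]_{p,q}=p^{k}+q[k]_{p,q}$, integrate using $\int_0^1 t^{j}\,d_{p,q}t=1/[j+1]_{p,q}$, and reduce everything to moments of the $(p,q)$-Bernstein--Schurer basis. The $j=0$ and $j=1$ cases are complete as written. For $j=2$, which you correctly single out as the delicate part but leave asserted rather than executed, two points deserve care. First, after the split $[k]_{p,q}^{2}=p^{k-1}[k]_{p,q}+q\,[k]_{p,q}[k-1]_{p,q}$, the second-factorial-moment sum enters with the prefactor $q\bigl(1+\tfrac{2(q-1)}{[2]_{p,q}}+\tfrac{(q-1)^{2}}{[3]_{p,q}}\bigr)$, whereas the stated coefficient of $x^{2}$ carries no $q$; so the basis identity you must invoke is the normalized one, $\sum_k q\,[k]_{p,q}[k-1]_{p,q}\,b_{n+m,k}(x)=[n+m]_{p,q}[n+m-1]_{p,q}x^{2}$ (the $q$ is absorbed into the moment, exactly as in the $q$-Bernstein case); quoting the unnormalized version would leave a spurious factor of $q$. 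Second, the identities $\sum_k b_{n+m,k}(x)=1$ and $\sum_k p^{jk}b_{n+m,k}(x)=(p^{j}x+1-x)_{p,q}^{n+m}$ hold for the properly normalized $(p,q)$-Bernstein basis (the one carrying ratios of $p^{k(k-1)/2}$ factors), and they do \emph{not} follow from the $(p,q)$-binomial expansion as typeset in this paper, which omits those powers; since you cite them from \cite{bernpqkonto} rather than rederive them from the displayed formula, your argument survives, but the parenthetical claim that each sum ``telescopes'' via the stated binomial theorem would fail if taken literally. Your proposed safeguards (checking $n+m=1,2$, or inducting on the Pascal-type recurrence) are sensible ways to catch exactly these two issues.
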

\section{Construction of the operators}
We construct the Chlodowsky variant of $(p,q)$ Kontorovich-Stancu-Schurer operators as
\begin{equation} \label{1}
K_{n,m}^{(\alpha,\beta)}(f;x,p,q)=\sum_{k=0}^{n+m} \left[\begin{array}{c} n+m \\ k \end{array} \right]_{p,q}\prod_{s=0}^{n+m-k-1} (p^s-q^s\dfrac{x}{b_n}) \left(\dfrac{x}{b_n}\right)^k  \int_0^1 f \left(\dfrac{(1-t)[k]_{p,q}+[k+1]_{p,q}t+\alpha}{[n+1]_{p,q}+\beta}b_n\right)\mathop{d_{p,q}t},
\end{equation}
where $n\in\mathbb{N}$, $m,\alpha,\beta \in\mathbb{N}_0$ with $0 \leq \alpha \leq \beta$, $0 \leq x \leq b_n$, $0<q< p\leq1.$\\ Obviously, $K_{n,m}^{(\alpha,\beta)}$
is a linear and positive operator. 
To begin with, we obtain the following important lemma.
\begin{lemma}\label{L2} Let $K_{n,m}^{(\alpha,\beta)}(f;x,p,q)$ be given by (\ref{1}). The first few moments of the operators are
\begin{eqnarray*}
&&(i)~ K_{n,m}^{(\alpha,\beta)}(1;x,p,q)=1,\\
&& (ii)~ K_{n,m}^{(\alpha,\beta)}(t;x,p,q)=\Biggl(\dfrac{1}{[n+1]_{p,q}+\beta}\Biggl) \Biggl(\alpha b_n+\dfrac{(p\frac{x}{b_n}+1-\frac{x}{b_n})_{p,q}^{n+m}}{[2]_{p,q}}b_n+\dfrac{(p+2q-1)[n+m]_{p,q}}{[2]_{p,q}}x\Biggl),\\
&&(iii)~ K_{n,m}^{(\alpha,\beta)}(t^2;x,p,q)=\Biggl(\dfrac{1}{[n+1]_{p,q}+\beta}\Biggl)^2 \Biggl[\left(\alpha^2+\dfrac{2\alpha}{[2]_{p,q}}\left(p\frac{x}{b_n}+1-\frac{x}{b_n}\right)_{p,q}^{n+m}+\dfrac{(p^2\frac{x}{b_n}+1-\frac{x}{b_n})_{p,q}^{n+m}}{[3]_{p,q}}\right)b_n^2
\\&&\hspace{4 cm}+\left(\dfrac{2\alpha}{[2]_{p,q}}(p+2q-1)+\left\{ 1+\dfrac{2q}{[2]_{p,q}}+\dfrac{q^2-1}{[3]_{p,q}}\right\}  \left(p\frac{x}{b_n}+1-\frac{x}{b_n}\right)_{p,q}^{n+m-1}\right)[n+m]_{p,q}b_nx
\\&&\hspace{4 cm}+\Biggl\{ 1+\dfrac{2(q-1)}{[2]_{p,q}}+\dfrac{(q-1)^2}{[3]_{p,q}}\Biggl\}[n+m]_{p,q}[n+m-1]_{p,q}x^2\Biggl],\\
&& (iv)~ K_{n,m}^{(\alpha,\beta)}((t-x);x,p,q)= \dfrac{[2]_{p,q}\alpha+(p\frac{x}{b_n}+1-\frac{x}{b_n})_{p,q}^{n+m}}{[2]_{p,q}([n+1]_{p,q}+\beta)}b_n+\Biggl(\dfrac{(p+2q-1)[n+m]_{p,q}}{[2]_{p,q}([n+1]_{p,q}+\beta)}-1\Biggl)x
\\
&& (v)~ K_{n,m}^{(\alpha,\beta)}((t-x)^2;x,p,q)= \left[\dfrac{\alpha^2}{([n+1]_{p,q}+\beta)^2}+\dfrac{2\alpha}{[2]_{p,q}([n+1]_{p,q}+\beta)^2}\left(p\frac{x}{b_n}+1-\frac{x}{b_n}\right)_{p,q}^{n+m}+\dfrac{(p^2\frac{x}{b_n}+1-\frac{x}{b_n})_{p,q}^{n+m}}{[3]_{p,q}([n+1]_{p,q}+\beta)^2}\right]b_n^2 \\&&\hspace{4.5 cm}
+\Biggl[\dfrac{2\alpha(p+2q-1)[n+m]_{p,q}}{[2]_{p,q}([n+1]_{p,q}+\beta)^2}+
\left\{ 1+\dfrac{2q}{[2]_{p,q}}+\dfrac{q^2-1}{[3]_{p,q}}\right\} \dfrac{[n+m]_{p,q}}{([n+1]_{p,q}+\beta)^2} \left(p\frac{x}{b_n}+1-\frac{x}{b_n}\right)_{p,q}^{n+m-1}
\\&& \hspace{5.5 cm}-\dfrac{2\alpha}{([n+1]_{p,q}+\beta)} - \dfrac{2(p\frac{x}{b_n}+1-\frac{x}{b_n})_{p,q}^{n+m}}{[2]_{p,q}([n+1]_{p,q}+\beta)}
\Biggl]b_nx\\&& \hspace{4.5 cm}+\Biggl[\Biggl\{ 1+\dfrac{2(q-1)}{[2]_{p,q}}+\dfrac{(q-1)^2}{[3]_{p,q}}\Biggl\}\dfrac{[n+m]_{p,q}[n+m-1]_{p,q}}{([n+1]_{p,q}+\beta)^2}-2 \dfrac{(p+2q-1)[n+m]_{p,q}}{[2]_{p,q}([n+1]_{p,q}+\beta)} +1
 \Biggl]x^2.
\end{eqnarray*} 
\end{lemma}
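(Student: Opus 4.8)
The plan is to reduce everything to Lemma~\ref{L1} by recognising $K_{n,m}^{(\alpha,\beta)}$ as an affine reparametrisation of the Kantorovich $(p,q)$-Bernstein--Schurer operator $T_{n,m}$. Put $u=x/b_n$. The building blocks $\left[\begin{array}{c} n+m \\ k \end{array}\right]_{p,q}\bigl(\tfrac{x}{b_n}\bigr)^{k}\prod_{s=0}^{n+m-k-1}\bigl(p^s-q^s\tfrac{x}{b_n}\bigr)$ occurring in (\ref{1}) are exactly the $(p,q)$-Bernstein--Schurer basis functions of $T_{n,m}$ evaluated at $u$, and the node inside the integral can be rewritten as
\[
\frac{(1-t)[k]_{p,q}+[k+1]_{p,q}t+\alpha}{[n+1]_{p,q}+\beta}\,b_n=\frac{b_n}{[n+1]_{p,q}+\beta}\Bigl([n+1]_{p,q}\,\xi_k(t)+\alpha\Bigr),\qquad \xi_k(t):=\frac{(1-t)[k]_{p,q}+[k+1]_{p,q}t}{[n+1]_{p,q}}.
\]
Since $\int_0^1 \xi_k\,d_{p,q}t$ and $\int_0^1 \xi_k^2\,d_{p,q}t$ are precisely the quantities that enter $T_{n,m}(t;u,p,q)$ and $T_{n,m}(t^2;u,p,q)$, linearity of the $d_{p,q}$-integral on $[0,1]$ together with $K_{n,m}^{(\alpha,\beta)}(1;x,p,q)=T_{n,m}(1;u,p,q)=1$ (which is already part (i)) yields
\[
K_{n,m}^{(\alpha,\beta)}(t;x,p,q)=\frac{b_n}{[n+1]_{p,q}+\beta}\Bigl([n+1]_{p,q}\,T_{n,m}(t;u,p,q)+\alpha\Bigr),
\]
\[
K_{n,m}^{(\alpha,\beta)}(t^2;x,p,q)=\Bigl(\frac{b_n}{[n+1]_{p,q}+\beta}\Bigr)^{\!2}\Bigl([n+1]_{p,q}^2\,T_{n,m}(t^2;u,p,q)+2\alpha[n+1]_{p,q}\,T_{n,m}(t;u,p,q)+\alpha^2\Bigr),
\]
the last identity obtained by expanding $\bigl([n+1]_{p,q}\xi_k(t)+\alpha\bigr)^2$ before integrating termwise.

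Parts (ii) and (iii) then follow by inserting the closed forms of $T_{n,m}(t;u,p,q)$ and $T_{n,m}(t^2;u,p,q)$ from Lemma~\ref{L1} with $x$ replaced by $u=x/b_n$: the explicit factors $[n+1]_{p,q}$ and $[n+1]_{p,q}^2$ cancel against the denominators in those formulas, and multiplying through by $b_n$, respectively $b_n^2$, turns $u\,b_n$ into $x$ and $u^2 b_n^2$ into $x^2$, while each factor $\bigl(p^j u+1-u\bigr)_{p,q}^{\bullet}$ becomes $\bigl(p^j\tfrac{x}{b_n}+1-\tfrac{x}{b_n}\bigr)_{p,q}^{\bullet}$; grouping according to the powers $b_n^2$, $b_n x$, $x^2$ produces exactly the stated expressions.

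Finally, (iv) and (v) use only linearity and (i): from $K_{n,m}^{(\alpha,\beta)}((t-x);x,p,q)=K_{n,m}^{(\alpha,\beta)}(t;x,p,q)-x$ and $K_{n,m}^{(\alpha,\beta)}((t-x)^2;x,p,q)=K_{n,m}^{(\alpha,\beta)}(t^2;x,p,q)-2x\,K_{n,m}^{(\alpha,\beta)}(t;x,p,q)+x^2$ one substitutes (ii) and (iii) and collects the terms by powers of $b_n$ and $x$. No $(p,q)$-identities beyond Lemma~\ref{L1} and the elementary integrals $\int_0^1 d_{p,q}t=1$, $\int_0^1 t\,d_{p,q}t=1/[2]_{p,q}$, $\int_0^1 t^2\,d_{p,q}t=1/[3]_{p,q}$ (already used in Lemma~\ref{L1}) are required, so the only real work --- and the place where a sign or a misplaced cross-term is easiest to get wrong --- is the bookkeeping in (iii) and (v): keeping each contribution coming from the squared node, and from the $-2x\,K_{n,m}^{(\alpha,\beta)}(t;x,p,q)$ term, in its correct bracket.
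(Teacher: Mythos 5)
Your proposal is correct and follows essentially the same route as the paper: both rewrite the node as an affine shift of the $T_{n,m}$ node, reduce $K_{n,m}^{(\alpha,\beta)}(t^u;x,p,q)$ for $u=0,1,2$ to the moments $T_{n,m}(t^i;\tfrac{x}{b_n},p,q)$ of Lemma~\ref{L1} (the paper via a binomial expansion in $\alpha/[n+1]_{p,q}$, you by expanding $([n+1]_{p,q}\xi_k+\alpha)$ and its square directly, which is the same computation), and then obtain (iv) and (v) by linearity. No gap; the bookkeeping you describe is exactly what the paper carries out.
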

\begin{proof}
From operator \ref{1},
\begin{eqnarray} \label{2}
&&K_{n,m}^{(\alpha,\beta)}(t^u;x,p,q)= \sum_{k=0}^{n+m} \left[\begin{array}{c} n+m \\ k \end{array} \right]_{p,q}\prod_{s=0}^{n+m-k-1} (p^s-q^s\dfrac{x}{b_n}) \left(\dfrac{x}{b_n}\right)^k  \int_0^1  \left(\dfrac{(1-t)[k]_{p,q}+[k+1]_{p,q}t+\alpha}{[n+1]_{p,q}+\beta}b_n\right)^u\mathop{d_{p,q}t}\nonumber \\&&\qquad \qquad \qquad ~~~=
\sum_{k=0}^{n+m} \left[\begin{array}{c} n+m \\ k \end{array} \right]_{p,q}\prod_{s=0}^{n+m-k-1} (p^s-q^s\dfrac{x}{b_n}) \left(\dfrac{x}{b_n}\right)^k \dfrac{[n+1]_{p,q}^u b_n^u}{([n+1]_{p,q}+\beta)^u} \nonumber \\&&\qquad \qquad \qquad ~~~~~~ \times \int_0^1  \left(\dfrac{(1-t)[k]_{p,q}+[k+1]_{p,q}t+\alpha}{[n+1]_{p,q}}\right)^u\mathop{d_{p,q}t}\nonumber \\&&\qquad \qquad \qquad ~~~=
\dfrac{[n+1]_{p,q}^u b_n^u}{([n+1]_{p,q}+\beta)^u} \sum_{k=0}^{n+m} \left[\begin{array}{c} n+m \\ k \end{array} \right]_{p,q}\left(\dfrac{x}{b_n}\right)^k \prod_{s=0}^{n+m-k-1} (p^s-q^s\dfrac{x}{b_n})  \sum_{i=0}^u \left(\begin{array}{c} u \\ i \end{array} \right)\Biggl(\dfrac{\alpha}{[n+1]_{p,q}} \Biggl)^{u-i}\nonumber \\&& \qquad \qquad \qquad ~~~~~~\times \int_0^1  \left(\dfrac{(1-t)[k]_{p,q}+[k+1]_{p,q}t}{[n+1]_{p,q}}\right)^u\mathop{d_{p,q}t}\nonumber \\&&\qquad \qquad \qquad ~~~
=\dfrac{[n+1]_{p,q}^u b_n^u}{([n+1]_{p,q}+\beta)^u} \sum_{i=0}^u \left(\begin{array}{c} u \\ i \end{array} \right)\Biggl(\dfrac{\alpha}{[n+1]_{p,q}} \Biggl)^{u-i}~~ \sum_{k=0}^{n+m} \left[\begin{array}{c} n+m \\ k \end{array} \right]_{p,q}\left(\dfrac{x}{b_n}\right)^k \prod_{s=0}^{n+m-k-1} (p^s-q^s\dfrac{x}{b_n})  \nonumber \\&& \qquad \qquad \qquad ~~~~~~ \times \int_0^1  \left(\dfrac{(1-t)[k]_{p,q}+[k+1]_{p,q}t}{[n+1]_{p,q}}\right)^i\mathop{d_{p,q}t}\nonumber \\&&
K_{n,m}^{(\alpha,\beta)}(t^u;x,p,q)=\dfrac{[n+1]_{p,q}^u b_n^u}{([n+1]_{p,q}+\beta)^u} \sum_{i=0}^u \left(\begin{array}{c} u \\ i \end{array} \right)\Biggl(\dfrac{\alpha}{[n+1]_{p,q}} \Biggl)^{u-i} T_{n,m}(t^i;\frac{x}{b_n},p,q).
\end{eqnarray}
Thus for u=0,1,2 we get
\begin{eqnarray*}
&&K_{n,m}^{(\alpha,\beta)}(1;x,p,q)=T_{n,m}(1;\frac{x}{b_n},p,q),\\
&&K_{n,m}^{(\alpha,\beta)}(t;x,p,q)=\dfrac{[n+1]_{p,q} b_n}{[n+1]_{p,q}+\beta} \sum_{i=0}^1 \left(\begin{array}{c} 1 \\ i \end{array} \right)\Biggl(\dfrac{\alpha}{[n+1]_{p,q}} \Biggl)^{1-i} T_{n,m}(t^i;\frac{x}{b_n},p,q)\\&&\qquad \qquad \qquad ~~=\Biggl(\dfrac{[n+1]_{p,q}}{[n+1]_{p,q}+\beta}\Biggl)b_n\left\{\dfrac{\alpha}{[n+1]_{p,q}}+T_{n,m}(t;\frac{x}{b_n},p,q)\right\}, \\
&&K_{n,m}^{(\alpha,\beta)}(t^2;x,p,q)=\dfrac{[n+1]_{p,q}^2 b_n^2}{([n+1]_{p,q}+\beta)^2} \sum_{i=0}^2 \left(\begin{array}{c} 2 \\ i \end{array} \right)\Biggl(\dfrac{\alpha}{[n+1]_{p,q}} \Biggl)^{2-i} T_{n,m}(t^i;\frac{x}{b_n},p,q)\\&&\qquad \qquad \qquad ~~=\dfrac{[n+1]_{p,q}^2 b_n^2}{([n+1]_{p,q}+\beta)^2} \Biggl[ \Biggl(\dfrac{\alpha}{[n+1]_{p,q}} \Biggl)^2+2\Biggl(\dfrac{\alpha}{[n+1]_{p,q}} \Biggl)T_{n,m}(t;\frac{x}{b_n},p,q)  +T_{n,m}(t^2;\frac{x}{b_n},p,q)\Biggl].
\end{eqnarray*}
Using Lemma \ref{L1} and in view of the above relations we get the statements (i), (ii) and (iii).\\
Using linear property of operators, we have
\begin{eqnarray*}
&&K_{n,m}^{(\alpha,\beta)}((t-x);x,p,q)=K_{n,m}^{(\alpha,\beta)}(t;x,p,q)-xK_{n,m}^{(\alpha,\beta)}(1;x,p,q)\\
&&\hspace{3.5 cm}= \dfrac{[2]_{p,q}\alpha+(p\frac{x}{b_n}+1-\frac{x}{b_n})_{p,q}^{n+m}}{[2]_{p,q}([n+1]_{p,q}+\beta)}b_n+\Biggl(\dfrac{(p+2q-1)[n+m]_{p,q}}{[2]_{p,q}([n+1]_{p,q}+\beta)}-1\Biggl)x.
\end{eqnarray*}
Hence, we get (iv).\\
Similar calculations give
$$
K_{n,m}^{(\alpha,\beta)}((t-x)^2;x,p,q)=K_{n,m}^{(\alpha,\beta)}(t^2;x,p,q)-2xK_{n,m}^{(\alpha,\beta)}(t;x,p,q)+x^2K_{n,m}^{(\alpha,\beta)}(1;x,p,q).$$
Then we obtain,
\begin{eqnarray*}
&&K_{n,m}^{(\alpha,\beta)}((t-x)^2;x,p,q)= \left[\dfrac{\alpha^2}{([n+1]_{p,q}+\beta)^2}+\dfrac{2\alpha}{[2]_{p,q}([n+1]_{p,q}+\beta)^2}\left(p\frac{x}{b_n}+1-\frac{x}{b_n}\right)_{p,q}^{n+m}+\dfrac{(p^2\frac{x}{b_n}+1-\frac{x}{b_n})_{p,q}^{n+m}}{[3]_{p,q}([n+1]_{p,q}+\beta)^2}\right]b_n^2 \\&&\hspace{4 cm}
+\Biggl[\dfrac{2\alpha(p+2q-1)[n+m]_{p,q}}{[2]_{p,q}([n+1]_{p,q}+\beta)^2}+
\left\{ 1+\dfrac{2q}{[2]_{p,q}}+\dfrac{q^2-1}{[3]_{p,q}}\right\} \dfrac{[n+m]_{p,q}}{([n+1]_{p,q}+\beta)^2} \left(p\frac{x}{b_n}+1-\frac{x}{b_n}\right)_{p,q}^{n+m-1}
\\&& \hspace{5 cm}-\dfrac{2\alpha}{([n+1]_{p,q}+\beta)} - \dfrac{2(p\frac{x}{b_n}+1-\frac{x}{b_n})_{p,q}^{n+m}}{[2]_{p,q}([n+1]_{p,q}+\beta)}
\Biggl]b_nx\\&& \hspace{4 cm}+\Biggl[\Biggl\{ 1+\dfrac{2(q-1)}{[2]_{p,q}}+\dfrac{(q-1)^2}{[3]_{p,q}}\Biggl\}\dfrac{[n+m]_{p,q}[n+m-1]_{p,q}}{([n+1]_{p,q}+\beta)^2}-2 \dfrac{(p+2q-1)[n+m]_{p,q}}{[2]_{p,q}([n+1]_{p,q}+\beta)} +1
 \Biggl]x^2.
\end{eqnarray*}
This proves (v).
\end{proof}

\section{Korovkin-type approximation theorem}
Assume $C_\rho$ is the space of all continuous functions $f$ such that $$|f(x)| \leq M \rho(x),~~~~~~~~~ -\infty<x<\infty.$$ 
Then $C_\rho$ is a Banach space with the norm $$\|f\|_\rho=\sup_{-\infty<x<\infty}\dfrac{|f(x)|}{\rho(x)}.$$
The subsequent results are used for proving Korovkin approximation theorem on unbounded sets.
\begin{theorem} \label{TT1}(See \cite{korovUnbounded}) There exists a sequence of positive linear operators $\mathbb{U}_n$, acting from $C_\rho$ to $C_\rho ,$ satisfying the conditions
\begin{enumerate}
\item[(1)] $\lim_{n\rightarrow \infty}\|\mathbb{U}_n(1;x)-1\|_\rho=0,$
\item[(2)]$\lim_{n\rightarrow \infty}\|\mathbb{U}_n(\phi;x)-\phi\|_\rho=0,$
\item[(3)]$\lim_{n\rightarrow \infty}\|\mathbb{U}_n(\phi^2;x)-\phi^2\|_\rho=0,$
\end{enumerate}
where $\phi(x)$ is a continuous and increasing function on $(-\infty,\infty)$ such that $\lim_{x\rightarrow \pm \infty} \phi(x) =
\pm \infty$ and $\rho(x) = 1 + \phi^2$, and there exists a function $f^* \in C_\rho$ for which $\varlimsup_{n \to \infty} \|\mathbb{U}_nf^* - f^*\|_\rho >0 $.\end{theorem}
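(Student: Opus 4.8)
The statement is the sharpness half of the weighted Korovkin theorem: it asserts that imposing the three test conditions on $\{1,\phi,\phi^{2}\}$ \emph{in the norm} $\|\cdot\|_{\rho}$ is not enough to force $\|\mathbb{U}_{n}f-f\|_{\rho}\to 0$ for every $f\in C_{\rho}$. The plan is to write down an explicit sequence of positive linear contractions on $C_{\rho}$ for which (1)--(3) hold but convergence fails on a concrete $f^{*}$. The mechanism to exploit is that on an unbounded domain $\|\mathbb{U}_{n}\phi-\phi\|_{\rho}\to 0$ does \emph{not} control the cross term $\phi\,(\mathbb{U}_{n}\phi-\phi)$ in the $\rho$-norm, because after dividing by $\rho=1+\phi^{2}$ one is left with a factor $|\phi|$ that is unbounded; hence an \emph{odd} perturbation that the weight absorbs at the level of $\phi$ can survive at the level of a function growing like $\phi\,|\phi|$.

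Concretely, since $\phi$ is continuous, increasing and $\phi(x)\to\pm\infty$, it is a homeomorphism of $\mathbb{R}$; put $\sigma:=\phi^{-1}\!\circ(-\phi)$, a continuous involution with $\phi\circ\sigma=-\phi$, so that $\rho\circ\sigma=\rho$. Choose continuous $\lambda_{n}\colon\mathbb{R}\to[0,1]$ with $\lambda_{n}(x)=0$ for $|\phi(x)|\le n$ and $\lambda_{n}(x)=1$ for $|\phi(x)|\ge n+1$, and set
\[
\mathbb{U}_{n}(f;x):=\bigl(1-\lambda_{n}(x)\bigr)\,f(x)+\lambda_{n}(x)\,f\bigl(\sigma(x)\bigr).
\]
Each $\mathbb{U}_{n}$ is clearly linear and positive; from $\rho\circ\sigma=\rho$ one gets $\mathbb{U}_{n}\rho=\rho$, hence by positivity $|\mathbb{U}_{n}f|\le\|f\|_{\rho}\,\rho$, i.e.\ $\mathbb{U}_{n}$ maps $C_{\rho}$ into itself with $\|\mathbb{U}_{n}\|\le 1$.

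I would then check (1)--(3) from the elementary identities that $\phi\circ\sigma=-\phi$ produces: $\mathbb{U}_{n}(1;x)=1$ and $\mathbb{U}_{n}(\phi^{2};x)=\phi^{2}(x)$ hold identically, giving (1) and (3) at once, while $\mathbb{U}_{n}(\phi;x)-\phi(x)=-2\lambda_{n}(x)\phi(x)$ yields
\[
\|\mathbb{U}_{n}\phi-\phi\|_{\rho}=\sup_{x}\frac{2\lambda_{n}(x)\,|\phi(x)|}{1+\phi^{2}(x)}\le\sup_{|\phi(x)|\ge n}\frac{2}{|\phi(x)|}\le\frac{2}{n}\longrightarrow 0,
\]
which is (2). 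For the failure I would take $f^{*}(x):=\phi(x)\,|\phi(x)|$, which lies in $C_{\rho}$ with $\|f^{*}\|_{\rho}=\sup_{x}\phi^{2}(x)/(1+\phi^{2}(x))=1$; since $f^{*}\circ\sigma=-f^{*}$ one has $\mathbb{U}_{n}(f^{*};x)-f^{*}(x)=-2\lambda_{n}(x)\,\phi(x)|\phi(x)|$, hence
\[
\|\mathbb{U}_{n}f^{*}-f^{*}\|_{\rho}=\sup_{x}\frac{2\lambda_{n}(x)\,\phi^{2}(x)}{1+\phi^{2}(x)}\ \ge\ \sup_{|\phi(x)|\ge n+1}\frac{2\phi^{2}(x)}{1+\phi^{2}(x)}=2,
\]
using that $\phi$ is unbounded; therefore $\varlimsup_{n\to\infty}\|\mathbb{U}_{n}f^{*}-f^{*}\|_{\rho}=2>0$, as required.

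The computations above are routine. The only point requiring thought is the design of the family: $\lambda_{n}$ must live where $|\phi|/(1+\phi^{2})$ is small — i.e.\ where $|\phi|$ is large — so that (2) is preserved, yet it must be \emph{heavy} (equal to $1$) precisely there so that the quadratically growing $f^{*}$ still feels the reflection $\sigma$. If one only assumes $\phi$ weakly increasing, replace it first by a strictly increasing continuous function with the same infinite limits, or equivalently pass to the substitution $u=\phi(x)$ and treat the model case $\phi(x)=x$, $\rho(x)=1+x^{2}$; this costs nothing.
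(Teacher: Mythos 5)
Your construction is correct. Note first that the paper itself gives no proof of this statement: Theorem \ref{TT1} is quoted from Gadjiev \cite{korovUnbounded}, so there is no internal argument to compare against; what you have supplied is a self-contained proof of the cited sharpness result. The reflection operators $\mathbb{U}_n f=(1-\lambda_n)f+\lambda_n\,(f\circ\sigma)$ with $\phi\circ\sigma=-\phi$ do exactly what is needed: they fix $1$ and $\phi^{2}$ pointwise, perturb $\phi$ only where $|\phi|/\rho$ is already small, and flip the sign of any $\rho$-comparable, odd-in-$\phi$ function on the set where $\lambda_n=1$; the displayed estimates all check out, and $f^{*}=\phi|\phi|$ lies in $C_\rho$ with $\|\mathbb{U}_nf^{*}-f^{*}\|_\rho=2$ for every $n$. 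Two remarks. First, you need $\phi$ strictly increasing for $\sigma=\phi^{-1}\circ(-\phi)$ to be a well-defined continuous involution; your closing comment covers this, and in Gadjiev's setting $\phi$ is a homeomorphism of $\mathbb{R}$, so nothing is lost. Second, your $f^{*}$ satisfies $f^{*}/\rho\to+1$ at $+\infty$ and $f^{*}/\rho\to-1$ at $-\infty$, so it lies outside Gadjiev's class of functions for which $\lim_{|x|\to\infty}f(x)/\rho(x)$ exists; that is precisely what keeps your example consistent with Theorem \ref{TT2}, and you should say so explicitly. As printed, the paper defines $C_\rho^{0}$ via the limit of $|f(x)|/\rho(x)$, and under that literal reading your $f^{*}$ would belong to $C_\rho^{0}$ and your operators would contradict Theorem \ref{TT2} --- so your example incidentally exposes the absolute value in that definition as a misprint rather than a flaw in your argument.
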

\begin{theorem} \label{TT2} (See \cite{korovUnbounded})
Conditions (1),(2),(3) of above theorem implies that $$\lim_{n\to \infty}\|\mathbb{U}_nf-f\|_\rho=0$$ for any function $f$ belonging to the subset $C_\rho^0 := \{ f \in C_\rho : lim_{|x| \to\infty} \frac{|f(x)|}{\rho(x)} ~is~ finite\}$ .
\end{theorem}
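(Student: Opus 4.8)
The statement is Gadzhiev's weighted Korovkin theorem, which the paper (rightly) takes from \cite{korovUnbounded}; rather than look for a new argument I would reconstruct the standard one, and here is the plan. Write $\|\cdot\|$ for $\|\cdot\|_\rho$ and $\varepsilon_n:=\max_{\nu=0,1,2}\|\mathbb{U}_n\phi^\nu-\phi^\nu\|\to0$. The first step is a normalisation: since $\rho=1+\phi^2$, conditions (1) and (3) give $\|\mathbb{U}_n\rho-\rho\|\to0$, so $C:=\sup_n\|\mathbb{U}_n\rho\|<\infty$ and, by positivity, $\|\mathbb{U}_n h\|\le C\|h\|$ for every $h\in C_\rho$. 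Moreover, for $f\in C_\rho^0$ the limit $c:=\lim_{|x|\to\infty}f(x)/\rho(x)$ exists, and because $\mathbb{U}_n\rho\to\rho$ in $\|\cdot\|$ it is enough to treat $g:=f-c\rho$, which satisfies $g(x)/\rho(x)\to0$ as $|x|\to\infty$.

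The second step is a compact/tail splitting of $g$. Fix $\varepsilon>0$, choose $A_1$ with $|g|\le\varepsilon\rho$ on $\{|x|\ge A_1\}$, pick a continuous cut-off $0\le\chi\le1$ equal to $1$ on $[-A_1,A_1]$ and $0$ off $[-A_1-1,A_1+1]$, and write $g=g_1+g_2$ with $g_1=\chi g$, $g_2=(1-\chi)g$. Then $g_1$ is continuous with compact support (hence bounded and uniformly continuous) and $g_2$ vanishes on $[-A_1,A_1]$ with $\|g_2\|\le\varepsilon$; consequently $\|\mathbb{U}_n g_2-g_2\|\le\|\mathbb{U}_n g_2\|+\|g_2\|\le(C+1)\varepsilon$ for all $n$, so everything reduces to showing $\|\mathbb{U}_n g_1-g_1\|\to0$.

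For $g_1$ I would again split on $x$. When $|x|$ is large, the support of $g_1$ is far from $x$, so for $t$ in that support $(\phi(t)-\phi(x))^2$ is bounded below by a multiple of $\rho(x)$; combining this with the identity $\mathbb{U}_n((\phi(t)-\phi(x))^2;x)=(\mathbb{U}_n\phi^2-\phi^2)(x)-2\phi(x)(\mathbb{U}_n\phi-\phi)(x)+\phi^2(x)(\mathbb{U}_n1-1)(x)$ and the bound $(1+|\phi|)^2\le2\rho$, one gets $\mathbb{U}_n((\phi(t)-\phi(x))^2;x)\le2\varepsilon_n\rho^2(x)$ and hence $|\mathbb{U}_n g_1(x)|/\rho(x)=O(\varepsilon_n)$ uniformly in that range, where $g_1$ itself vanishes. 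When $|x|$ lies in a fixed compact set $[-A_3,A_3]$, I would use the standard modulus-of-continuity estimate $|g_1(t)-g_1(x)|\le\omega(g_1\circ\phi^{-1};\delta)\bigl(1+\delta^{-2}(\phi(t)-\phi(x))^2\bigr)$, apply $\mathbb{U}_n$, and invoke that there $\mathbb{U}_n1\le1+O(\varepsilon_n)$, $|\mathbb{U}_n1-1|=O(\varepsilon_n)$ and $\mathbb{U}_n((\phi(t)-\phi(x))^2;x)=O(\varepsilon_n)$ uniformly; then choosing $\delta$ small and $n$ large makes $|\mathbb{U}_n g_1(x)-g_1(x)|$ small there as well. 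Combining the two ranges yields $\|\mathbb{U}_n g_1-g_1\|\to0$, hence $\limsup_n\|\mathbb{U}_n f-f\|\le(C+1)\varepsilon$, and $\varepsilon\downarrow0$ finishes the argument.

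The real obstacle is the unboundedness of $\rho$: the central moment $\mathbb{U}_n((\phi(t)-\phi(x))^2;x)$ is only $O(\varepsilon_n\rho^2(x))$ and does \emph{not} tend to $0$ uniformly in $x$, so the naive global Korovkin computation breaks down. The localisation above is precisely what repairs this — on a fixed compact set the extra power of $\rho$ is a harmless constant, and on the complementary tail the geometric factor coming from $\mathrm{dist}(x,\mathrm{supp}\,g_1)$ absorbs one power of $\rho(x)$. It is at the normalisation $f\mapsto f-c\rho$ and in the bound $\|g_2\|\le\varepsilon$ that the hypothesis $f\in C_\rho^0$ is genuinely used, and Theorem \ref{TT1} is exactly the statement confirming that this restriction on $f$ cannot be dropped.
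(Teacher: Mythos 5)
Your argument is correct and is, in substance, the standard proof of Gadjiev's weighted Korovkin theorem; the paper itself offers no proof of Theorem \ref{TT2} at all, only the citation to \cite{korovUnbounded}, so there is nothing internal to compare against. All the key steps check out: the uniform boundedness $\|\mathbb{U}_n h\|_\rho\le C\|h\|_\rho$ from positivity and $\sup_n\|\mathbb{U}_n\rho\|_\rho<\infty$; the reduction to $g$ with $g/\rho\to 0$; the tail estimate $\|g_2\|_\rho\le\varepsilon$ giving $\|\mathbb{U}_ng_2-g_2\|_\rho\le(C+1)\varepsilon$ uniformly in $n$; the identity for $\mathbb{U}_n\bigl((\phi(t)-\phi(x))^2;x\bigr)$ together with $(1+|\phi|)^2\le 2\rho$ yielding the $2\varepsilon_n\rho^2(x)$ bound; and the observation that one factor of $\rho(x)$ is absorbed by $\mathrm{dist}\bigl(\phi(x),\phi(\mathrm{supp}\,g_1)\bigr)^2\gtrsim\rho(x)$ off a compact set, while on the compact set the modulus-of-continuity estimate for $g_1\circ\phi^{-1}$ (which is uniformly continuous, being continuous with compact support since $\phi$ is an increasing homeomorphism of $\mathbb{R}$) closes the argument. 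You also correctly identify why the naive global computation fails and where $f\in C_\rho^0$ is indispensable.

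One small caveat, which is a defect of the paper's statement rather than of your proof: the paper defines $C_\rho^0$ by requiring $\lim_{|x|\to\infty}|f(x)|/\rho(x)$ to be finite, whereas your normalisation $g:=f-c\rho$ tacitly uses the existence of $c=\lim_{|x|\to\infty}f(x)/\rho(x)$ itself (the same signed limit at both ends). That is the hypothesis in Gadjiev's original formulation, and it is what makes the reduction legitimate; with only the paper's literal condition, $f/\rho$ could approach different values at $+\infty$ and $-\infty$ and the single subtraction $c\rho$ would not suffice (one would instead subtract a $C_\rho$ function interpolating the two limiting multiples of $\rho$, after which your argument goes through unchanged). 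It is worth flagging this discrepancy, but it does not affect the correctness of your reconstruction of the intended theorem.
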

Consider the weight function $\rho(x)=1+x^2$ and operators:
$$\mathbb{U}_{n,m}^{\alpha,\beta}(f;x,p,q)=\begin{cases}
K_{n,m}^{\alpha,\beta}(f;x,p,q)&\mathrm{if~} x \in [0,b_n],\\
f(x)& \mathrm{if~} x \in [0,\infty)/[0,b_n].\\
\end{cases}$$\\
 Thus for $f \in C_{1+x^2},$ we have
 \begin{align*}
 \|\mathbb{U}_{n,m}^{\alpha,\beta}(f;\cdot ,p,q)\| &\leq \sup_{x\in[0,b_n]} \dfrac{|\mathbb{U}_{n,m}^{\alpha,\beta}(f;x,p,q)|}{1+x^2}+\sup_{b_n<x<\infty}\dfrac{|f(x)|}{1+x^2} \\
 & \leq \|f\|_{1+x^2} \Biggl[ \sup_{x\in[0,\infty)} \dfrac{|\mathbb{U}_{n,m}^{\alpha,\beta}(1+t^2;x,p,q)|}{1+x^2}+1\Biggl].
 \end{align*}

Now, using Lemma \ref{L2}, we will obtain,
$$\|\mathbb{U}_{n,m}^{\alpha,\beta}(f;\cdot ,p,q)\|_{1+x^2} \leq M \|f\|_{1+x^2}$$
if $p:=(p)_n,q:=(q)_n$ with $0<q_n < p_n\leq 1,\lim_{n\to \infty}p_n=1, \lim_{n\to \infty}q_n=1,\lim_{n\to \infty}p_n^n=\lim_{n\to \infty}q_n^n=N, N<\infty ~and~ \lim_{n\to \infty}\frac{b_n}{[n]}=0.$
\begin{theorem} \label{TT3}
For all $f\in C_{1+x^2}^0$, we have $$\lim_{n\to \infty}\|\mathbb{U}_{n,m}^{\alpha,\beta}(f;\cdot ,p_n,q_n)-f(\cdot)\|_{1+x^2}=0$$
provided that $p:=(p)_n,q:=(q)_n$ with $0<q_n < p_n\leq 1,\lim_{n\to \infty}p_n=1, \lim_{n\to \infty}q_n=1,\lim_{n\to \infty}p_n^n=\lim_{n\to \infty}q_n^n=N, N<\infty ~and~ \lim_{n\to \infty}\frac{b_n}{[n]}=0.$
\end{theorem}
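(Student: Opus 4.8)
\emph{Proof strategy.} The plan is to obtain the statement from Theorem~\ref{TT2}, applied with the increasing function $\phi(x)=x$ and the weight $\rho(x)=1+x^2$, so that it is enough to verify the three Korovkin-type conditions (1)--(3) of Theorem~\ref{TT1} for the sequence $\mathbb{U}_{n,m}^{\alpha,\beta}(\cdot\,;\cdot,p_n,q_n)$; the required mapping property $\mathbb{U}_{n,m}^{\alpha,\beta}:C_{1+x^2}\to C_{1+x^2}$ was already recorded above via the bound $\|\mathbb{U}_{n,m}^{\alpha,\beta}(f;\cdot,p,q)\|_{1+x^2}\le M\|f\|_{1+x^2}$. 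Since $\mathbb{U}_{n,m}^{\alpha,\beta}(f;x,p,q)=f(x)$ for $x>b_n$, each of the differences $\mathbb{U}_{n,m}^{\alpha,\beta}(1;x)-1$, $\mathbb{U}_{n,m}^{\alpha,\beta}(t;x)-x$, $\mathbb{U}_{n,m}^{\alpha,\beta}(t^2;x)-x^2$ vanishes on $(b_n,\infty)$, so in the weighted supremum only $x\in[0,b_n]$ matters, where $\mathbb{U}_{n,m}^{\alpha,\beta}=K_{n,m}^{\alpha,\beta}$. Condition (1) is then immediate from Lemma~\ref{L2}(i).

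\emph{Preliminary estimates.} Two elementary facts will be used throughout. First, writing $u=x/b_n\in[0,1]$ and using $0<q<p\le1$, every factor of $\bigl(p\tfrac{x}{b_n}+1-\tfrac{x}{b_n}\bigr)_{p,q}^{j}=\prod_{s=0}^{j-1}\bigl(p^{s+1}u+q^{s}(1-u)\bigr)$ lies in $[0,1]$, hence $0\le\bigl(p\tfrac{x}{b_n}+1-\tfrac{x}{b_n}\bigr)_{p,q}^{j}\le1$ (and likewise with $p$ replaced by $p^2$). Second, the $(p,q)$-addition rule $[a+b]_{p,q}=p^{b}[a]_{p,q}+q^{a}[b]_{p,q}$ yields $[n+m]_{p_n,q_n}/[n+1]_{p_n,q_n}\to1$ and $[n+m-1]_{p_n,q_n}/[n+1]_{p_n,q_n}\to1$, because $p_n,q_n\to1$, $q_n^{\,n+1}\to N<\infty$, and $[n+1]_{p_n,q_n}\to\infty$; the last divergence follows from $b_n\to\infty$, $b_n/[n]_{p_n,q_n}\to0$ and $[n+1]_{p,q}\ge q[n]_{p,q}$, which in the same way gives $b_n/[n+1]_{p_n,q_n}\to0$.

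\emph{Verification of (2) and (3).} By Lemma~\ref{L2}(iv), $K_{n,m}^{\alpha,\beta}(t-x;x)$ is the sum of a term of the form $(\text{bounded})\cdot b_n/([n+1]_{p_n,q_n}+\beta)$ and a term $\bigl(\tfrac{(p_n+2q_n-1)[n+m]_{p_n,q_n}}{[2]_{p_n,q_n}([n+1]_{p_n,q_n}+\beta)}-1\bigr)x$; dividing by $1+x^2$ and using the bound above together with $x/(1+x^2)\le\tfrac12$, the first term tends to $0$ uniformly since $b_n/[n+1]_{p_n,q_n}\to0$, while the bracket in the second tends to $0$ since $(p_n+2q_n-1)/[2]_{p_n,q_n}\to1$ and $[n+m]_{p_n,q_n}/([n+1]_{p_n,q_n}+\beta)\to1$, giving (2). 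For (3), Lemma~\ref{L2}(iii) writes $K_{n,m}^{\alpha,\beta}(t^2;x)-x^2=A_n(u)b_n^2+B_n(u)b_nx+(C_n-1)x^2$, where, uniformly in $u\in[0,1]$, $A_n(u)=O\bigl(1/[n+1]_{p_n,q_n}^2\bigr)$, $B_n(u)=O\bigl([n+m]_{p_n,q_n}/[n+1]_{p_n,q_n}^2\bigr)=O\bigl(1/[n+1]_{p_n,q_n}\bigr)$, and $C_n=\bigl\{1+\tfrac{2(q_n-1)}{[2]_{p_n,q_n}}+\tfrac{(q_n-1)^2}{[3]_{p_n,q_n}}\bigr\}\tfrac{[n+m]_{p_n,q_n}[n+m-1]_{p_n,q_n}}{([n+1]_{p_n,q_n}+\beta)^2}\to1$. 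Using $b_n^2/(1+x^2)\le b_n^2$, $b_nx/(1+x^2)\le b_n/2$ and $x^2/(1+x^2)\le1$, the three contributions to the weighted supremum are $O\bigl((b_n/[n+1]_{p_n,q_n})^2\bigr)$, $O\bigl(b_n/[n+1]_{p_n,q_n}\bigr)$ and $|C_n-1|$ respectively, all tending to $0$. Hence (3) holds and Theorem~\ref{TT2} gives the claim.

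\emph{Main difficulty.} The genuinely delicate step is the mixed $b_nx$ term in (3) (and the analogous coefficient of $x$ in (2)): because $b_n\to\infty$, the estimate $b_nx/(1+x^2)\le b_n/2$ is useless on its own, and one must show that the accompanying coefficient decays like $1/[n+1]_{p_n,q_n}$, so that the product is tamed precisely by the hypothesis $b_n/[n]_{p_n,q_n}\to0$. This rests on $[n+m]_{p_n,q_n}/[n+1]_{p_n,q_n}\to1$ and $[n+1]_{p_n,q_n}\to\infty$, which is where the assumptions $p_n^n\to N$, $q_n^n\to N<\infty$ enter, through the $(p,q)$-addition identity; the remaining bookkeeping is routine expansion of the terms in Lemma~\ref{L2}.
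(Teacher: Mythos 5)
Your proposal is correct and follows essentially the same route as the paper: verify conditions (1)--(3) of Theorem \ref{TT1} for $\mathbb{U}_{n,m}^{\alpha,\beta}$ using the moments in Lemma \ref{L2}, bound the weighted suprema so that only $b_n/[n+1]_{p_n,q_n}\to 0$ and the limits $p_n,q_n\to1$, $p_n^n,q_n^n\to N$ are needed, and conclude by Theorem \ref{TT2}. Your write-up merely makes explicit some steps the paper leaves implicit (the bound $(p\tfrac{x}{b_n}+1-\tfrac{x}{b_n})_{p,q}^{j}\le1$, the $(p,q)$-addition identity giving $[n+m]_{p,q}/[n+1]_{p,q}\to1$, and $[n+1]_{p_n,q_n}\to\infty$), which is a welcome but not essentially different argument.
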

\begin{proof}
Using the results of Theorem \ref{TT1} and Lemma \ref{L2}(i),(ii) and (iii), we will achieve the following
assessments, respectively:
$$\sup_{x\in[0,\infty)}\dfrac{|\mathbb{U}_{n,m}^{\alpha,\beta}(1;x,p_n,q_n)-1|}{1+x^2}=\sup_{0\leq x \leq b_n}\dfrac{|K_{n,m}^{\alpha,\beta}(1;x,p_n,q_n)-1|}{1+x^2}=0$$
\begin{eqnarray*}
&&\sup_{x\in[0,\infty)}\dfrac{|\mathbb{U}_{n,m}^{\alpha,\beta}(t;x,p_n,q_n)-t|}{1+x^2}=\sup_{0\leq x \leq b_n}\dfrac{|K_{n,m}^{\alpha,\beta}(t;x,p_n,q_n)-x|}{1+x^2} \\
&&\hspace{1 cm} \leq 
 \sup_{0\leq x \leq b_n}\dfrac{\dfrac{[2]_{p,q}\alpha+(p\frac{x}{b_n}+1-\frac{x}{b_n})_{p,q}^{n+m}}{[2]_{p,q}([n+1]_{p,q}+\beta)}b_n+\Biggl(\dfrac{(p+2q-1)[n+m]_{p,q}}{[2]_{p,q}([n+1]_{p,q}+\beta)}-1\Biggl)x}{1+x^2} \\
&& \hspace{1 cm}\leq  \dfrac{[2]_{p,q}\alpha+(p\frac{x}{b_n}+1-\frac{x}{b_n})_{p,q}^{n+m}}{[2]_{p,q}([n+1]_{p,q}+\beta)}b_n+\Biggl|\dfrac{(p+2q-1)[n+m]_{p,q}}{[2]_{p,q}([n+1]_{p,q}+\beta)}-1\Biggl| \rightarrow 0
\end{eqnarray*}
and\\
\begin{eqnarray*}
&&\sup_{x\in[0,\infty)}\dfrac{|\mathbb{U}_{n,m}^{\alpha,\beta}(t^2;x,p_n,q_n)-t^2|}{1+x^2}=\sup_{ 0\leq x \leq b_n}\dfrac{|K_{n,m}^{\alpha,\beta}(t^2;x,p_n,q_n)-x^2|}{1+x^2}\\
&& \hspace{1 cm} \leq  \sup_{ 0\leq x \leq b_n}\dfrac{1}{1+x^2}\Biggl(\left[\dfrac{\alpha^2}{([n+1]_{p,q}+\beta)^2}+\dfrac{2\alpha}{[2]_{p,q}([n+1]_{p,q}+\beta)^2}\left(p\frac{x}{b_n}+1-\frac{x}{b_n}\right)_{p,q}^{n+m}+\dfrac{(p^2\frac{x}{b_n}+1-\frac{x}{b_n})_{p,q}^{n+m}}{[3]_{p,q}([n+1]_{p,q}+\beta)^2}\right]b_n^2 \\&&\hspace{2 cm}
+\Biggl[\dfrac{2\alpha(p+2q-1)[n+m]_{p,q}}{[2]_{p,q}([n+1]_{p,q}+\beta)^2}+
\left\{ 1+\dfrac{2q}{[2]_{p,q}}+\dfrac{q^2-1}{[3]_{p,q}}\right\} \dfrac{[n+m]_{p,q}}{([n+1]_{p,q}+\beta)^2} \left(p\frac{x}{b_n}+1-\frac{x}{b_n}\right)_{p,q}^{n+m-1}
\\&& \hspace{2.5 cm}-\dfrac{2\alpha}{([n+1]_{p,q}+\beta)} - \dfrac{2(p\frac{x}{b_n}+1-\frac{x}{b_n})_{p,q}^{n+m}}{[2]_{p,q}([n+1]_{p,q}+\beta)}
\Biggl]b_nx\\&& \hspace{2 cm}+\Biggl[\Biggl\{ 1+\dfrac{2(q-1)}{[2]_{p,q}}+\dfrac{(q-1)^2}{[3]_{p,q}}\Biggl\}\dfrac{[n+m]_{p,q}[n+m-1]_{p,q}}{([n+1]_{p,q}+\beta)^2}-2 \dfrac{(p+2q-1)[n+m]_{p,q}}{[2]_{p,q}([n+1]_{p,q}+\beta)} +1 \Biggl]x^2
\Biggl)\\
&& \hspace{1 cm} \leq \Biggl(\left[\dfrac{\alpha^2}{([n+1]_{p,q}+\beta)^2}+\dfrac{2\alpha}{[2]_{p,q}([n+1]_{p,q}+\beta)^2}\left(p\frac{x}{b_n}+1-\frac{x}{b_n}\right)_{p,q}^{n+m}+\dfrac{(p^2\frac{x}{b_n}+1-\frac{x}{b_n})_{p,q}^{n+m}}{[3]_{p,q}([n+1]_{p,q}+\beta)^2}\right]b_n^2 \\&&\hspace{2 cm}
+\Biggl|\dfrac{2\alpha(p+2q-1)[n+m]_{p,q}}{[2]_{p,q}([n+1]_{p,q}+\beta)^2}+
\left\{ 1+\dfrac{2q}{[2]_{p,q}}+\dfrac{q^2-1}{[3]_{p,q}}\right\} \dfrac{[n+m]_{p,q}}{([n+1]_{p,q}+\beta)^2} \left(p\frac{x}{b_n}+1-\frac{x}{b_n}\right)_{p,q}^{n+m-1}
\\&& \hspace{2.5 cm}-\dfrac{2\alpha}{([n+1]_{p,q}+\beta)} - \dfrac{2(p\frac{x}{b_n}+1-\frac{x}{b_n})_{p,q}^{n+m}}{[2]_{p,q}([n+1]_{p,q}+\beta)}
\Biggl|b_n\\&& \hspace{2 cm}+\Biggl|
\Biggl\{ 1+\dfrac{2(q-1)}{[2]_{p,q}}+\dfrac{(q-1)^2}{[3]_{p,q}}\Biggl\}\dfrac{[n+m]_{p,q}[n+m-1]_{p,q}}{([n+1]_{p,q}+\beta)^2}-2 \dfrac{(p+2q-1)[n+m]_{p,q}}{[2]_{p,q}([n+1]_{p,q}+\beta)} +1 
 \Biggl|~ \Biggl) \longrightarrow   0
\end{eqnarray*}
whenever $n \to \infty$, because we have $\lim_{n\to \infty}p_n=\lim_{n\to \infty}q_n =1$ and $\frac{b_n}{[n]_{p,q}}=0$ as $n \to \infty$.
\end{proof}
\begin{theorem}\label{TT4} Assuming $C$ as a positive and real number independent of $n$ and $f$ as a continuous
function which vanishes on $[C,\infty)$. Let $p := (p_n), q := (q_n)$ with $0 < q_n < p_n \leq 1, \lim_{n\to \infty} p_n =\lim_{n\to \infty} q_n= 1,~\lim_{n\to \infty} p_n^n = \lim_{n\to \infty} q_n^n= N < \infty $ and $\lim_{n\to \infty} \frac{b_n^2}{[n]_{p,q}} =0$. Then we have
 $$\lim_{n\to \infty}\sup_{0 \leq x\leq b_n} \left| K_{n,m}^{\alpha,\beta}(f;x,p_n,q_n)- f(x) \right|=0.$$ 
\end{theorem}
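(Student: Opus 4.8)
The plan is to run the classical modulus-of-continuity argument and thereby reduce the whole statement to one uniform estimate for the second central moment, which Lemma~\ref{L2}(v) already supplies. Since $f$ is continuous on $[0,\infty)$ and vanishes on $[C,\infty)$, it is bounded (put $M:=\sup_{x\ge0}|f(x)|<\infty$) and uniformly continuous on $[0,\infty)$, so its modulus of continuity $\omega(f,\delta):=\sup\{|f(s)-f(t)|:s,t\ge0,\ |s-t|\le\delta\}$ tends to $0$ as $\delta\to0^{+}$. Fix $x\in[0,b_n]$. Using $K_{n,m}^{\alpha,\beta}(1;x,p_n,q_n)=1$ from Lemma~\ref{L2}(i), the linearity and positivity of $K_{n,m}^{\alpha,\beta}$, and the elementary inequality $|f(t)-f(x)|\le(1+\delta^{-2}(t-x)^{2})\,\omega(f,\delta)$ valid for every $\delta>0$, one gets
$$\bigl|K_{n,m}^{\alpha,\beta}(f;x,p_n,q_n)-f(x)\bigr|\le K_{n,m}^{\alpha,\beta}\bigl(|f(t)-f(x)|;x,p_n,q_n\bigr)\le\omega(f,\delta)\bigl(1+\delta^{-2}K_{n,m}^{\alpha,\beta}((t-x)^{2};x,p_n,q_n)\bigr),$$
uniformly for $x\in[0,b_n]$.

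The crux is to show that $\mu_n:=\sup_{0\le x\le b_n}K_{n,m}^{\alpha,\beta}\bigl((t-x)^{2};x,p_n,q_n\bigr)\to 0$. I would start from the explicit formula in Lemma~\ref{L2}(v), use $0\le x\le b_n$ (so $x^{2}\le b_n^{2}$ and $b_nx\le b_n^{2}$) together with the bounds $0\le\bigl(p\frac{x}{b_n}+1-\frac{x}{b_n}\bigr)_{p,q}^{k}\le1$ for the $(p,q)$-power factors appearing there, and thereby bound each of its three blocks of terms by $b_n^{2}$ times the size of the relevant coefficient. The coefficients of $b_n^{2}$ and of $b_nx$ are $O\bigl([n+1]_{p,q}^{-1}\bigr)$, so those two blocks contribute $O\bigl(b_n^{2}/[n]_{p,q}\bigr)$. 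For the coefficient $A_n$ of $x^{2}$ the $O(1)$ parts cancel (in the limit $1-2\cdot1+1=0$), and expanding by means of relations such as $[n+m]_{p,q}=p^{m-1}[n+1]_{p,q}+q^{n+1}[m-1]_{p,q}$ one checks that $A_n=O(1-p_n)+O(1-q_n)+O\bigl([n+1]_{p,q}^{-1}\bigr)$. The last ingredient is the elementary estimate $[n+1]_{p,q}=\sum_{j=0}^{n}p_n^{n-j}q_n^{j}\le\sum_{j=0}^{n}q_n^{j}<(1-q_n)^{-1}$ and, likewise, $[n+1]_{p,q}<(1-p_n)^{-1}$: these make $1-p_n$ and $1-q_n$ themselves $O\bigl([n+1]_{p,q}^{-1}\bigr)$, so that $A_n b_n^{2}=O\bigl(b_n^{2}/[n]_{p,q}\bigr)$ as well. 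Altogether $\mu_n=O\bigl(b_n^{2}/[n]_{p,q}\bigr)\to 0$ by the hypothesis $\lim_{n\to\infty}b_n^{2}/[n]_{p,q}=0$.

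To finish, take $\delta=\delta_n:=\sqrt{\mu_n}+n^{-1}$, so that $\delta_n\to0$ and $\mu_n/\delta_n^{2}\le1$; the displayed bound then yields $\sup_{0\le x\le b_n}\bigl|K_{n,m}^{\alpha,\beta}(f;x,p_n,q_n)-f(x)\bigr|\le2\,\omega(f,\delta_n)\to 0$, which is the assertion. The one genuinely delicate point is the estimate for $\mu_n$: because $b_n\to\infty$, knowing merely $A_n\to0$ is worthless, and one has to pin down the precise order $A_n=O(1-p_n)+O(1-q_n)+O([n+1]_{p,q}^{-1})$ from Lemma~\ref{L2}(v) and then absorb $b_n^{2}(1-p_n)$ and $b_n^{2}(1-q_n)$ into multiples of $b_n^{2}/[n]_{p,q}$ via the bounds $[n+1]_{p,q}<(1-q_n)^{-1}$ and $[n+1]_{p,q}<(1-p_n)^{-1}$. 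This is precisely the reason the theorem is stated with the stronger hypothesis $b_n^{2}/[n]_{p,q}\to0$, rather than the condition $b_n/[n]_{p,q}\to0$ appearing in Theorem~\ref{TT3}.
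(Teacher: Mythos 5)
Your proposal is correct and follows essentially the same route as the paper: the standard uniform-continuity/second-moment argument (your $\omega(f,\delta)\bigl(1+\delta^{-2}(t-x)^2\bigr)$ bound is just a cosmetic variant of the paper's $\epsilon+\tfrac{2M}{\delta^2}(t-x)^2$), reducing everything to showing $\sup_{0\le x\le b_n}K_{n,m}^{\alpha,\beta}((t-x)^2;x,p_n,q_n)\to0$. In fact your treatment of that reduction is more careful than the paper's: after bounding the second moment by $b_n^2$ times the sum of the coefficients from Lemma \ref{L2}(v), the paper simply asserts the conclusion from $b_n^2/[n]_{p,q}\to0$, whereas — as you rightly flag — the coefficient of $x^2$ only tends to $0$, which is useless against $b_n^2\to\infty$ unless one quantifies it; your estimates $1-p_n<[n+1]_{p,q}^{-1}$ and $1-q_n<[n+1]_{p,q}^{-1}$, combined with the expansion of $[n+m]_{p,q}$ in terms of $[n+1]_{p,q}$, supply exactly the missing order $O\bigl([n+1]_{p,q}^{-1}\bigr)$ and thereby close the one genuine gap in the published argument.
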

\begin{proof}
 From the hypothesis on $f$, it is bounded i.e. $|f(x)| \leq M ~(M>0)$. For any $\epsilon >0$, we have
$$\left|f \left(\dfrac{(1-t)[k]_{p,q}+[k+1]_{p,q}t+\alpha}{[n+1]_{p,q}+\beta}b_n\right)-f(x)\right|<\epsilon+\dfrac{2M}{\delta^2}\left(\dfrac{(1-t)[k]_{p,q}+[k+1]_{p,q}t+\alpha}{[n+1]_{p,q}+\beta}b_n-x\right)^2,$$
where $x\in[0,b_n]$ and $\delta=\delta(\epsilon)$
are independent of $n$. Now since we know,
$$K_{n,m}^{(\alpha,\beta)}((t-x)^2;x,p_n,q_n)=\sum_{k=0}^{n+m} \left[\begin{array}{c} n+m \\ k \end{array} \right]_{p,q}\prod_{s=0}^{n+m-k-1} (p^s-q^s\frac{x}{b_n}) \left(\frac{x}{b_n}\right)^k  \int_0^1 \left(\dfrac{(1-t)[k]_{p,q}+[k+1]_{p,q}t+\alpha}{[n+1]_{p,q}+\beta}b_n\right)^2\mathop{dt}.$$
we can conclude by Theorem \ref{TT3},
\begin{eqnarray*}
&&\sup_{0 \leq x\leq b_n} \left| K_{n,m}^{\alpha,\beta}(f;x,p_n,q_n)- f(x) \right| \leq \epsilon +\dfrac{2M}{\delta^2} b_n^2\Biggl(\Biggl[\dfrac{\alpha^2}{([n+1]_{p,q}+\beta)^2}+\dfrac{2\alpha}{[2]_{p,q}([n+1]_{p,q}+\beta)^2}\left(p\frac{x}{b_n}+1-\frac{x}{b_n}\right)_{p,q}^{n+m}\\&&\hspace{4 cm}+\dfrac{(p^2\frac{x}{b_n}+1-\frac{x}{b_n})_{p,q}^{n+m}}{[3]_{p,q}([n+1]_{p,q}+\beta)^2}\Biggl] \\&&\hspace{2 cm}
+\Biggl|\dfrac{2\alpha(p+2q-1)[n+m]_{p,q}}{[2]_{p,q}([n+1]_{p,q}+\beta)^2}+
\left\{ 1+\dfrac{2q}{[2]_{p,q}}+\dfrac{q^2-1}{[3]_{p,q}}\right\} \dfrac{[n+m]_{p,q}}{([n+1]_{p,q}+\beta)^2} \left(p\frac{x}{b_n}+1-\frac{x}{b_n}\right)_{p,q}^{n+m-1}
\\&& \hspace{2.5 cm}-\dfrac{2\alpha}{([n+1]_{p,q}+\beta)} - \dfrac{2(p\frac{x}{b_n}+1-\frac{x}{b_n})_{p,q}^{n+m}}{[2]_{p,q}([n+1]_{p,q}+\beta)}
\Biggl|\\&& \hspace{2 cm}+\Biggl|
\Biggl\{ 1+\dfrac{2(q-1)}{[2]_{p,q}}+\dfrac{(q-1)^2}{[3]_{p,q}}\Biggl\}\dfrac{[n+m]_{p,q}[n+m-1]_{p,q}}{([n+1]_{p,q}+\beta)^2}-2 \dfrac{(p+2q-1)[n+m]_{p,q}}{[2]_{p,q}([n+1]_{p,q}+\beta)} +1 
 \Biggl|~ \Biggl).
\end{eqnarray*}
Since $\frac{b_n^2}{[n]_{p,q}} =0$ as $n \to \infty,$ we have the desired result.\end{proof}
\section{Rate of Convergence}
We will find the rate of convergence in terms of the Lipschitz class $Lip_M(\gamma )$, for $0 < \gamma \leq 1$. Assume that $C_B[0,\infty)$ denote the space of bounded continuous functions on $[0,\infty)$. A function $f \in C_B[0,\infty)$ belongs to $Lip_M(\gamma )$ if
$$ |f (t) - f (x)| \leq M|t - x|^\gamma,~~~~ t, x \in [0,\infty)$$
is satisfied.\\
\begin{theorem} \label{T1} Let $f \in Lip_M(\gamma )$, then
$$K_{n,m}^{\alpha,\beta}(f;x,p,q)\leq M(\mu_{n,p,q}(x))^{\gamma /2},$$ where
$\mu_{n,p,q}(x)=K_{n,m}^{\alpha,\beta}((t-x)^2;x,p,q).$\end{theorem}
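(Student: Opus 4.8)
The plan is to follow the standard route for Lipschitz-class estimates, using that $K_{n,m}^{\alpha,\beta}(\cdot\,;x,p,q)$ is a positive linear operator (as noted just after its definition (\ref{1})) and that it reproduces constants, $K_{n,m}^{\alpha,\beta}(1;x,p,q)=1$, by Lemma~\ref{L2}(i). I read the assertion as the bound for $\bigl|K_{n,m}^{\alpha,\beta}(f;x,p,q)-f(x)\bigr|$, which is what the argument produces and what $\mu_{n,p,q}$ is designed to control.

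First I would use the reproduction of constants to rewrite the error as
$$K_{n,m}^{\alpha,\beta}(f;x,p,q)-f(x)=K_{n,m}^{\alpha,\beta}\bigl(f(t)-f(x);x,p,q\bigr),$$
and then, by positivity together with the hypothesis $f\in Lip_M(\gamma)$,
$$\bigl|K_{n,m}^{\alpha,\beta}(f;x,p,q)-f(x)\bigr|\le K_{n,m}^{\alpha,\beta}\bigl(|f(t)-f(x)|;x,p,q\bigr)\le M\,K_{n,m}^{\alpha,\beta}\bigl(|t-x|^{\gamma};x,p,q\bigr).$$

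The key step is Hölder's inequality for the positive linear functional $g\mapsto K_{n,m}^{\alpha,\beta}(g;x,p,q)$ with conjugate exponents $\tfrac{2}{\gamma}$ and $\tfrac{2}{2-\gamma}$ (admissible since $0<\gamma\le 1$): writing $|t-x|^{\gamma}=|t-x|^{\gamma}\cdot 1$,
$$K_{n,m}^{\alpha,\beta}\bigl(|t-x|^{\gamma};x,p,q\bigr)\le\Bigl(K_{n,m}^{\alpha,\beta}\bigl((t-x)^{2};x,p,q\bigr)\Bigr)^{\gamma/2}\Bigl(K_{n,m}^{\alpha,\beta}(1;x,p,q)\Bigr)^{1-\gamma/2}.$$
By Lemma~\ref{L2}(i) the last factor is $1$, so the right-hand side equals $\bigl(\mu_{n,p,q}(x)\bigr)^{\gamma/2}$ with $\mu_{n,p,q}(x)=K_{n,m}^{\alpha,\beta}((t-x)^2;x,p,q)$, whose closed form is recorded in Lemma~\ref{L2}(v). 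Chaining the two displays gives the stated bound; when $\gamma=1$ one may skip the Hölder step and invoke Cauchy--Schwarz directly.

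I expect the only genuine point needing care to be the justification of the Hölder step, i.e.\ verifying that $K_{n,m}^{\alpha,\beta}(\cdot\,;x,p,q)$ really is a positive linear functional of total mass exactly $1$ (the mass being $K_{n,m}^{\alpha,\beta}(1;x,p,q)=1$), so that the $(1-\gamma/2)$-th power of the mass collapses to $1$; equivalently one can argue via Jensen's inequality applied to the convex map $s\mapsto s^{2/\gamma}$ and the probability functional $K_{n,m}^{\alpha,\beta}(\cdot\,;x,p,q)$. Everything else is routine, and in particular the lengthy explicit expression for $\mu_{n,p,q}(x)$ plays no role here: it is needed only afterwards, when the asymptotics of the $(p,q)$-integers and of $b_n$ are substituted to extract the actual rate of convergence.
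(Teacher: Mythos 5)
Your proposal is correct and follows essentially the same route as the paper: the Lipschitz bound followed by H\"older's inequality with exponents $\tfrac{2}{\gamma}$ and $\tfrac{2}{2-\gamma}$, using $K_{n,m}^{\alpha,\beta}(1;x,p,q)=1$ (and, like the paper's proof, it actually establishes the bound for $\bigl|K_{n,m}^{\alpha,\beta}(f;x,p,q)-f(x)\bigr|$, which is what the theorem intends). The only cosmetic difference is that the paper applies H\"older twice, first to the inner $(p,q)$-integral and then to the finite sum over $k$ with the weights $w_{n,k}$, whereas you apply it once to the whole positive normalized functional, which amounts to the same computation.
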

\begin{proof}
Since $f \in Lip_M(\gamma )$, and the operator $K_{n,m}^{\alpha,\beta}(f;x,p,q)$ is linear and monotone,
\begin{eqnarray*}
&&| K_{n,m}^{\alpha,\beta}(f;x,p,q)- f(x)| \\&& \hspace{1 cm}= \Biggl|\sum_{k=0}^{n+m} \left[\begin{array}{c} n+m \\ k \end{array} \right]_{p,q}  \prod_{s=0}^{n+m-k-1} (p^s-q^s\dfrac{x}{b_n}) \left(\dfrac{x}{b_n}\right)^k \\&& \hspace{2 cm} \times \int_0^1 \left( f \left(\dfrac{(1-t)[k]_{p,q}+[k+1]_{p,q}t+\alpha}{[n+1]_{p,q}+\beta}b_n\right)-f(x)\right)\mathop{d_{p,q}t}\Biggl|\\
&& \hspace{1 cm} \leq  \sum_{k=0}^{n+m} \left[\begin{array}{c} n+m \\ k \end{array} \right]_{p,q}  \prod_{s=0}^{n+m-k-1} (p^s-q^s\dfrac{x}{b_n})\left(\dfrac{x}{b_n}\right)^k \\&& \hspace{2 cm} \times \int_0^1 \left| f \left(\dfrac{(1-t)[k]_{p,q}+[k+1]_{p,q}t+\alpha}{[n+1]_{p,q}+\beta}b_n\right)-f(x)\right|\mathop{d_{p,q}t} \\
&& \hspace{1 cm} \leq M \sum_{k=0}^{n+m} \left[\begin{array}{c} n+m \\ k \end{array} \right]_{p,q}  \prod_{s=0}^{n+m-k-1} (p^s-q^s\dfrac{x}{b_n})\left(\dfrac{x}{b_n}\right)^k \\&& \hspace{2 cm} \times \int_0^1 \left| \dfrac{(1-t)[k]_{p,q}+[k+1]_{p,q}t+\alpha}{[n+1]_{p,q}+\beta}b_n-x\right|^\gamma\mathop{d_{p,q}t}.
\end{eqnarray*}
Applying H\"{o}lder's inequality with the values $p=\frac{2}{\gamma}$ and $q=\frac{2}{2-\gamma}$, we get following inequality,
\begin{eqnarray*}
&& \int_0^1 \left| \dfrac{(1-t)[k]_{p,q}+[k+1]_{p,q}t+\alpha}{[n+1]_{p,q}+\beta}b_n-x\right|^\gamma\mathop{d_{p,q}t} \\
&&  \qquad \leq \left\{ \int_0^1 \left( \dfrac{(1-t)[k]_{p,q}+[k+1]_{p,q}t+\alpha}{[n+1]_{p,q}+\beta}b_n-x\right)^2 \mathop{d_{p,q}t} \right\}^{\frac{\gamma}{2}}\left\{\int_0^1\mathop{d_{p,q}t}\right\}^{\frac{2-\gamma}{2}}
\\&&\qquad =  \left\{ \int_0^1 \left( \dfrac{(1-t)[k]_{p,q}+[k+1]_{p,q}t+\alpha}{[n+1]_{p,q}+\beta}b_n-x\right)^2 \mathop{d_{p,q}t} \right\}^{\frac{\gamma}{2}}
\end{eqnarray*}
Using this, we get
\begin{eqnarray*}
&&| K_{n,m}^{\alpha,\beta}(f;x,p,q)- f(x)|\\
&& \qquad \leq  M \sum_{k=0}^{n+m} \left[\begin{array}{c} n+m \\ k \end{array} \right]_{p,q}  \prod_{s=0}^{n+m-k-1} (p^s-q^s\dfrac{x}{b_n})\left(\dfrac{x}{b_n}\right)^k \\&& \hspace{2 cm} \times \left\{ \int_0^1 \left( \dfrac{(1-t)[k]_{p,q}+[k+1]_{p,q}t+\alpha}{[n+1]_{p,q}+\beta}b_n-x\right)^2 \mathop{d_{p,q}t} \right\}^{\frac{\gamma}{2}}
 \\&& \qquad = M \sum_{k=0}^{n+m} \left\{ \int_0^1 \left( \dfrac{(1-t)[k]_{p,q}+[k+1]_{p,q}t+\alpha}{[n+1]_{p,q}+\beta}b_n-x\right)^2 \mathop{d_{p,q}t} \right\}^{\frac{\gamma}{2}} w_{n,k}(p,q;x), 
\end{eqnarray*}
where $ w_{n,k}(p,q;x)=\big[\begin{array}{c} n+m \\ k \end{array} \big]_{p,q}  \prod_{s=0}^{n+m-k-1} (p^s-q^s\frac{x}{b_n})\left(\frac{x}{b_n}\right)^k $. Again using 
H\"{o}lder's inequality with $p=\frac{2}{\gamma}$ and $q=\frac{2}{2-\gamma}$, we have
\begin{eqnarray*}
&&| K_{n,m}^{\alpha,\beta}(f;x,p,q)- f(x)|\\
&& \qquad \leq M \left\{ \sum_{k=0}^{n+m} \int_0^1 \left( \dfrac{(1-t)[k]_{p,q}+[k+1]_{p,q}t+\alpha}{[n+1]_{p,q}+\beta}b_n-x\right)^2 \mathop{d_{p,q}t}~ w_{n,k}(p,q;x) \right\}^{\frac{\gamma}{2}} \left\{\sum_{k=0}^{n+m}w_{n,k}(p,q;x)  \right\}^{\frac{2-\gamma}{2}}
\\&& \qquad = M \left\{ \sum_{k=0}^{n+m} w_{n,k}(p,q;x) \int_0^1 \left( \dfrac{(1-t)[k]_{p,q}+[k+1]_{p,q}t+\alpha}{[n+1]_{p,q}+\beta}b_n-x\right)^2 \mathop{d_{p,q}t} \right\}^{\frac{\gamma}{2}}
\\&& \qquad = M (\mu_{n,p,q}(x))^{\gamma /2},
\end{eqnarray*}
where $(\mu_{n,p,q}(x))^{\gamma /2}=K_{n,m}^{\alpha,\beta}((t-x)^2;x,p,q)$.
\end{proof} 
In order to obtain rate of convergence in terms of modulus of continuity $\omega(f;\delta)$, we assume that for any uniformly continuous $f \in C_B[0,\infty)$ and $x \geq 0$, modulus of continuity of $f$ is given by
\begin{equation}
\omega(f;\delta)=\max_{\substack
{|t-x|\leq \delta \\ t,x\in [0,\infty)}}|f(t)-f(x)|.
\end{equation}
Thus it implies for any $\delta > 0$
\begin{equation}\label{E2}
|f(x)-f(y)| \leq \omega(f;\delta)\left( \dfrac{|x-y|}{\delta}+1\right),
\end{equation}
is satisfied.
\begin{theorem}\label{T2} If $f \in C_B[0,\infty),$ we have 
$$|K_{n,m}^{\alpha,\beta}(f;x,p,q)- f(x)| \leq 2 \omega (f;\sqrt[]{\mu_{n,p,q}(x)}),$$
where $\omega(f;\cdot)$ is modulus of continuity of $f$ and $\lambda_{n,p,q}(x)$ be the same
as in Theorem \ref{T1}.
\end{theorem}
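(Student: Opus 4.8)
The plan is to run the classical Korovkin-type estimate that controls the error of a positive linear operator by its second moment through the modulus of continuity. First I would exploit the normalization $K_{n,m}^{\alpha,\beta}(1;x,p,q)=1$ from Lemma \ref{L2}(i) to write
$$|K_{n,m}^{\alpha,\beta}(f;x,p,q)-f(x)| = \left|K_{n,m}^{\alpha,\beta}\big(f(t)-f(x);x,p,q\big)\right|,$$
and then, since $K_{n,m}^{\alpha,\beta}$ is linear and positive (monotone), pass the absolute value inside the operator. In the explicit series--integral form this means bounding
$$|K_{n,m}^{\alpha,\beta}(f;x,p,q)-f(x)| \le \sum_{k=0}^{n+m} w_{n,k}(p,q;x) \int_0^1 \left| f\!\left(\tfrac{(1-t)[k]_{p,q}+[k+1]_{p,q}t+\alpha}{[n+1]_{p,q}+\beta}b_n\right) - f(x) \right| d_{p,q}t,$$
where $w_{n,k}(p,q;x)$ is the basis weight introduced in the proof of Theorem \ref{T1}.

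Next I would apply inequality (\ref{E2}): for every $\delta>0$,
$$\left| f\!\left(\tfrac{(1-t)[k]_{p,q}+[k+1]_{p,q}t+\alpha}{[n+1]_{p,q}+\beta}b_n\right) - f(x) \right| \le \omega(f;\delta)\left( \frac{1}{\delta}\left| \tfrac{(1-t)[k]_{p,q}+[k+1]_{p,q}t+\alpha}{[n+1]_{p,q}+\beta}b_n - x \right| + 1 \right).$$
Substituting this, summing against $w_{n,k}(p,q;x)$ and integrating in $t$, and using once more that $\sum_k w_{n,k}(p,q;x)\int_0^1 d_{p,q}t = K_{n,m}^{\alpha,\beta}(1;x,p,q)=1$, I obtain
$$|K_{n,m}^{\alpha,\beta}(f;x,p,q)-f(x)| \le \omega(f;\delta)\left( 1 + \frac{1}{\delta}\, K_{n,m}^{\alpha,\beta}(|t-x|;x,p,q) \right).$$

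The last step is to estimate the first absolute moment by the square root of the second moment. Applying the Cauchy--Schwarz inequality (Hölder with both exponents equal to $2$) to the sum--integral pairing against the weights $w_{n,k}(p,q;x)\,d_{p,q}t$ — exactly as in the second Hölder step of the proof of Theorem \ref{T1} — yields
$$K_{n,m}^{\alpha,\beta}(|t-x|;x,p,q) \le \left( K_{n,m}^{\alpha,\beta}\big((t-x)^2;x,p,q\big) \right)^{1/2} = \sqrt{\mu_{n,p,q}(x)},$$
where the last equality is the definition of $\mu_{n,p,q}(x)$ from Theorem \ref{T1}. Choosing $\delta = \sqrt{\mu_{n,p,q}(x)}$ makes the bracket equal to $2$, giving the asserted inequality. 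There is no real obstacle here; the only point deserving care is that $w_{n,k}(p,q;x)\,d_{p,q}t$ is a probability measure for $x\in[0,b_n]$ (so that Cauchy--Schwarz applies with constant $1$), which is precisely Lemma \ref{L2}(i). Moreover Lemma \ref{L2}(v) supplies the explicit expression for $\mu_{n,p,q}(x)$, so the resulting bound is fully effective.
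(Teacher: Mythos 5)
Your proposal is correct and follows essentially the same route as the paper: bound the error via positivity and the modulus-of-continuity inequality (\ref{E2}), estimate the first absolute moment by $\sqrt{K_{n,m}^{\alpha,\beta}((t-x)^2;x,p,q)}$ via Cauchy--Schwarz (H\"older with exponents $2,2$), and choose $\delta=\sqrt{\mu_{n,p,q}(x)}$. If anything, your write-up is slightly more careful than the paper's, which omits the $d_{p,q}t$ integration in its intermediate displays and misstates the final choice as $\delta=\mu_{n,p,q}(x)$ rather than its square root.
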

\begin{proof}
 Using triangular inequality, we get
\begin{eqnarray*}
&&|K_{n,m}^{\alpha,\beta}(f;x,p,q)- f(x)|\\
&&\qquad = \Biggl|\sum_{k=0}^{n+m} \left[\begin{array}{c} n+m \\ k \end{array} \right]_{p,q} \left(\dfrac{x}{b_n}\right)^k \prod_{s=0}^{n+m-k-1} (p^s-q^s\dfrac{x}{b_n}) \left(f \left(\dfrac{(1-t)[k]_{p,q}+[k+1]_{p,q}t+\alpha}{[n+1]_{p,q}+\beta}b_n\right)-f(x)\right)\Biggl|\\
&& \qquad \leq   \sum_{k=0}^{n+m} \left[\begin{array}{c} n+m \\ k \end{array} \right]_{p,q} \left(\dfrac{x}{b_n}\right)^k \prod_{s=0}^{n+m-k-1} (p^s-q^s\dfrac{x}{b_n}) \left|f \left(\dfrac{(1-t)[k]_{p,q}+[k+1]_{p,q}t+\alpha}{[n+1]_{p,q}+\beta}b_n\right)-f(x)\right|,
\end{eqnarray*}
Now using (\ref{E2}) and H\"{o}lder's inequality, we get
\begin{eqnarray*}
&&|K_{n,m}^{\alpha,\beta}(f;x,p,q)- f(x)|\\
&&\qquad = \sum_{k=0}^{n+m} \left[\begin{array}{c} n+m \\ k \end{array} \right]_{p,q} \left(\dfrac{x}{b_n}\right)^k \prod_{s=0}^{n+m-k-1} (p^s-q^s\dfrac{x}{b_n}) \left(\dfrac{|\frac{(1-t)[k]_{p,q}+[k+1]_{p,q}t+\alpha}{[n+1]_{p,q}+\beta}b_n-x|}{\delta} +1\right)\omega(f;\delta)\\
&&\qquad \leq  \omega(f;\delta) \sum_{k=0}^{n+m} \left[\begin{array}{c} n+m \\ k \end{array} \right]_{p,q} \left(\dfrac{x}{b_n}\right)^k \prod_{s=0}^{n+m-k-1} (p^s-q^s\dfrac{x}{b_n})
\\&&\qquad \qquad +\dfrac{\omega(f;\delta)}{\delta} \sum_{k=0}^{n+m} \left[\begin{array}{c} n+m \\ k \end{array} \right]_{p,q} \left(\dfrac{x}{b_n}\right)^k \prod_{s=0}^{n+m-k-1} (p^s-q^s\dfrac{x}{b_n}) \left|\frac{(1-t)[k]_{p,q}+[k+1]_{p,q}t+\alpha}{[n+1]_{p,q}+\beta}b_n-x\right| \\
&& \qquad = \omega(f;\delta)+\dfrac{\omega(f;\delta)}{\delta}\Biggl\{\sum_{k=0}^{n+m} \left[\begin{array}{c} n+m \\ k \end{array} \right]_{p,q} \left(\dfrac{x}{b_n}\right)^k \prod_{s=0}^{n+m-k-1} (p^s-q^s\dfrac{x}{b_n}) \\&& \qquad \qquad \times \left(\frac{(1-t)[k]_{p,q}+[k+1]_{p,q}t+\alpha}{[n+1]_{p,q}+\beta}b_n-x\right)^2\Biggl\}^{\frac{1}{2}}\\
&&\qquad = \omega(f;\delta)+\dfrac{\omega(f;\delta)}{\delta}\left\{K_{n,m}^{\alpha,\beta}((t-x)^2;x,p,q)\right\}^{1/2}.
\end{eqnarray*}
Now choosing $\delta=\mu_{n,p,q}(x)$ as in Theorem \ref{T1}, we have
$$|K_{n,m}^{\alpha,\beta}(f;x,p,q)- f(x)| \leq 2 \omega(f;\sqrt{\mu_{n,p,q}(x)}). $$
\end{proof}
Now let us denote by $C_B^2[0,\infty)$  the space of all functions $f \in C_B[0,\infty)$ such that $f', f''\in C_B[0,\infty)$. Let $\|f \|$ denote the usual supremum norm of $f$ . The classical Peetre's
K-functional and the second modulus of smoothness of the function $f \in C_B[0,\infty)$ are defined
respectively as
\begin{equation*}
K(f,\delta):= \inf_{g\in C_B^2[0,\infty)}[\|f-g\|+\delta \|g''\|]
\end{equation*}
and
\begin{equation*}
\omega_2(f,\delta)=\sup_{\substack
{0<h<\delta, \\ x,x+h \in I}}|f(x+2h)-2f(x+h)+f(x)|,
\end{equation*}
where $\delta>0$. It is known that [see \cite{devore}, p. 177] there exists a constant $A>0$ such that
\begin{equation}
K(f,\delta)\leq A\omega_2(f,\delta).
\end{equation}
\begin{theorem}\label{T3}
Let $x \in [0,b_n]$ and $f\in C_B[0,\infty)$. Then, for fixed $p \in \mathbb{N}_0$, we have 
\begin{equation*}
|K_{n,m}^{\alpha,\beta}(f;x,p,q)-f(x)|\leq C\omega_2 (f,\sqrt{\alpha_{n,p,q}(x)})+\omega (f,\beta_{n,p,q}(x))
\end{equation*}
for some positive constant $C$, where
\begin{eqnarray}\label{alpha}
&&\alpha_{n,p,q}(x)=\Biggl[\Biggl\{ 1+\dfrac{2(q-1)}{[2]_{p,q}}+\dfrac{(q-1)^2}{[3]_{p,q}}+\dfrac{(p+2q-1)^2}{[2]_{p,q}^2} \Biggl\}\dfrac{[n+m]_{p,q}^2}{([n+1]_{p,q}+\beta)^2}-4 \dfrac{(p+2q-1)[n+m]_{p,q}}{[2]_{p,q}([n+1]_{p,q}+\beta)}+2 \Biggl] x^2 
\nonumber \\&& \hspace{1 cm}+ \Biggl[
\left\{ 1+\dfrac{2q}{[2]_{p,q}}+\dfrac{q^2-1}{[3]_{p,q}}+2\dfrac{(p+2q-1)}{[2]_{p,q}^2}\right\} \dfrac{[n+m]_{p,q}}{([n+1]_{p,q}+\beta)^2}\left(p\frac{x}{b_n}+1-\frac{x}{b_n}\right)_{p,q}^{n+m}\nonumber \\ && \hspace{3 cm}
+4\dfrac{\alpha (p+2q-1)[n+m]_{p,q}}{[2]_{p,q}([n+1]_{p,q}+\beta)^2}-4\dfrac{(p\frac{x}{b_n}+1-\frac{x}{b_n})_{p,q}^{n+m}}{[2]_{p,q}([n+1]_{p,q}+\beta)}-4\dfrac{\alpha}{([n+1]_{p,q}+\beta)}
\Biggl]b_nx \nonumber \\&& \hspace{1 cm}
+ \Biggl[ \dfrac{(p^2\frac{x}{b_n}+1-\frac{x}{b_n})_{p,q}^{n+m}}{[3]_{p,q}}+\dfrac{(p\frac{x}{b_n}+1-\frac{x}{b_n})_{p,q}^{2n+2m}}{[2]_{p,q}^2}+4\dfrac{\alpha}{[2]_{p,q}}(p\frac{x}{b_n}+1-\frac{x}{b_n})_{p,q}^{n+m}+2\alpha^2\Biggl]\dfrac{b_n^2}{([n+1]_{p,q}+\beta)^2} ,
\end{eqnarray}
and
\begin{equation}\label{beta}
\beta_{n,p,q}(x)=\Biggl(\dfrac{[2]_{p,q}\alpha+(p\frac{x}{b_n}+1-\frac{x}{b_n})_{p,q}^{n+m}}{[2]_{p,q}([n+1]_{p,q}+\beta)}b_n+\Biggl(\dfrac{(p+2q-1)[n+m]_{p,q}}{[2]_{p,q}([n+1]_{p,q}+\beta)}-1\Biggl)x \Biggl).
\end{equation}
\end{theorem}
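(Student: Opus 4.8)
The plan is to use the classical Peetre $K$-functional technique, after first replacing $K_{n,m}^{\alpha,\beta}$ by an auxiliary operator that annihilates the first central moment. I would introduce
\[
\widetilde{K}_{n,m}^{\alpha,\beta}(f;x,p,q) := K_{n,m}^{\alpha,\beta}(f;x,p,q) - f\bigl(x+\beta_{n,p,q}(x)\bigr) + f(x),
\]
where $\beta_{n,p,q}(x)=K_{n,m}^{\alpha,\beta}((t-x);x,p,q)$ is precisely the quantity of Lemma \ref{L2}(iv) displayed in \eqref{beta}. This operator is linear, satisfies $\widetilde{K}_{n,m}^{\alpha,\beta}(1;x,p,q)=1$ and, by construction, $\widetilde{K}_{n,m}^{\alpha,\beta}((t-x);x,p,q)=0$. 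Since $K_{n,m}^{\alpha,\beta}$ is positive with $K_{n,m}^{\alpha,\beta}(1;x,p,q)=1$ we have $|K_{n,m}^{\alpha,\beta}(f;x,p,q)|\le\|f\|$, hence $|\widetilde{K}_{n,m}^{\alpha,\beta}(f;x,p,q)|\le 3\|f\|$ for every $f\in C_B[0,\infty)$.

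Next, for $g\in C_B^2[0,\infty)$ I would use Taylor's formula $g(t)=g(x)+(t-x)g'(x)+\int_x^t(t-u)g''(u)\,du$ and apply $\widetilde{K}_{n,m}^{\alpha,\beta}$ in the variable $t$. The linear term vanishes because $\widetilde{K}_{n,m}^{\alpha,\beta}((t-x);x,p,q)=0$, and estimating the two remainder integrals (the one coming from $K_{n,m}^{\alpha,\beta}$ and the one produced by the correction term $f(x+\beta_{n,p,q}(x))$) by $\tfrac12\|g''\|(t-x)^2$ and $\tfrac12\|g''\|\,\beta_{n,p,q}(x)^2$ respectively, gives
\[
\bigl|\widetilde{K}_{n,m}^{\alpha,\beta}(g;x,p,q)-g(x)\bigr|\le\tfrac12\|g''\|\Bigl(K_{n,m}^{\alpha,\beta}((t-x)^2;x,p,q)+\beta_{n,p,q}(x)^2\Bigr).
\]
Substituting the formula of Lemma \ref{L2}(v) and the square of the expression in \eqref{beta}, and using the elementary inequality $[n+m-1]_{p,q}\le[n+m]_{p,q}$ (valid for $0<q<p\le1$), one checks that the bracket is at most $\alpha_{n,p,q}(x)$ as in \eqref{alpha}, so that $\bigl|\widetilde{K}_{n,m}^{\alpha,\beta}(g;x,p,q)-g(x)\bigr|\le\tfrac12\alpha_{n,p,q}(x)\|g''\|$.

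Finally I would combine the pieces. For any $g\in C_B^2[0,\infty)$,
\[
\bigl|K_{n,m}^{\alpha,\beta}(f;x,p,q)-f(x)\bigr|\le\bigl|\widetilde{K}_{n,m}^{\alpha,\beta}(f-g;x,p,q)\bigr|+|(f-g)(x)|+\bigl|\widetilde{K}_{n,m}^{\alpha,\beta}(g;x,p,q)-g(x)\bigr|+\bigl|f(x+\beta_{n,p,q}(x))-f(x)\bigr|,
\]
and the four terms are bounded by $3\|f-g\|$, $\|f-g\|$, $\tfrac12\alpha_{n,p,q}(x)\|g''\|$, and $\omega\bigl(f;\beta_{n,p,q}(x)\bigr)$ respectively. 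Hence $\bigl|K_{n,m}^{\alpha,\beta}(f;x,p,q)-f(x)\bigr|\le4\bigl(\|f-g\|+\tfrac18\alpha_{n,p,q}(x)\|g''\|\bigr)+\omega(f;\beta_{n,p,q}(x))$; taking the infimum over $g$ produces $4\,K\bigl(f,\tfrac18\alpha_{n,p,q}(x)\bigr)+\omega(f;\beta_{n,p,q}(x))$, and invoking the standard estimate $K(f,\delta)\le A\,\omega_2(f,\sqrt{\delta})$ together with the monotonicity of $\omega_2$ yields the assertion with $C=4A$.

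The only real obstacle is the bookkeeping in the second step: verifying that $K_{n,m}^{\alpha,\beta}((t-x)^2;x,p,q)+\beta_{n,p,q}(x)^2$ reduces, after the estimate $[n+m-1]_{p,q}\le[n+m]_{p,q}$, exactly to the displayed $\alpha_{n,p,q}(x)$. This amounts to expanding the polynomials of Lemma \ref{L2}(iv)--(v), squaring the expression in \eqref{beta}, and collecting the coefficients of $x^2$, $b_nx$ and $b_n^2$; it is lengthy but entirely mechanical. All remaining steps are the standard Peetre-$K$-functional argument.
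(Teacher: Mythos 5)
Your proposal is correct and follows essentially the same route as the paper: the same auxiliary operator $K^{*}$ obtained by subtracting $f$ evaluated at the first moment $x+\beta_{n,p,q}(x)$, the same Taylor-remainder estimate via Lemma \ref{L2}(iv)--(v), the bound $|K^{*}f|\le 3\|f\|$, and the same passage from the Peetre $K$-functional to $\omega_2$. You are in fact slightly more careful than the paper, keeping the factor $\tfrac12$ in the Taylor remainder and explicitly flagging the bookkeeping step (the replacement $[n+m-1]_{p,q}\le[n+m]_{p,q}$ and the collection of coefficients of $x^2$, $b_nx$, $b_n^2$) that the paper passes over without comment.
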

\begin{proof}
Consider an auxiliary operator $K_{n,m}^{*}(f;x,p,q): C_B[0,\infty) \to C_B[0,\infty)$ by
\begin{equation}\label{operator}
K_{n,m}^{*}(f;x,p,q):=K_{n,m}^{\alpha,\beta}(f;x,p,q)-f\left(\dfrac{[2]_{p,q}\alpha+(p\frac{x}{b_n}+1-\frac{x}{b_n})_{p,q}^{n+m}}{[2]_{p,q}([n+1]_{p,q}+\beta)}b_n+\dfrac{(p+2q-1)[n+m]_{p,q}}{[2]_{p,q}([n+1]_{p,q}+\beta)}x\right)+f(x).
\end{equation}
Then by Lemma \ref{L2} we get
\begin{align}
\label{eqn:eqlabel}
\begin{split}
K_{n,m}^{*}(1;x,p,q)&=1,
\\
K_{n,m}^{*}((t-x);x,p,q)&=0.
\end{split}
\end{align}
For given $g \in C_B [0,\infty)$, it follows by the Taylor formula that
\begin{equation*}
g(y) - g(x) = (y - x)g'(x) + \int_x^y (y - u)g''(u) \mathop{du}.
\end{equation*}
Taking into account \ref{operator} and using \ref{eqn:eqlabel}, we get
\begin{eqnarray*}
|K_{n,m}^{*}(g;x,p,q)-g(x)|&=&|K_{n,m}^{*}(g(y)-g(x);x,p,q)|\\
&=& \left|g'(x)K_{n,m}^{*}((y-x);x,p,q)+K_{n,m}^{*}\left(\int_x^y(y-u)g''(u) \mathop{du};x,p,q\right)\right|\\
&=& \left|K_{n,m}^{*}\left(\int_x^y(y-u)g''(u) \mathop{du};x,p,q\right)\right|
\end{eqnarray*}
Then by \ref{operator}
\begin{eqnarray*}
&&|K_{n,m}^{*}(g;x,p,q)-g(x)| \\ &&
=\Biggl|K_{n,m}^{*}\left(\int_x^y(y-u)g''(u) \mathop{du};x,p,q\right)\\
&&\hspace{.1 cm}
- \int_x^{\footnotesize{\left(\frac{[2]_{p,q}\alpha+(p\frac{x}{b_n}+1-\frac{x}{b_n})_{p,q}^{n+m}}{[2]_{p,q}([n+1]_{p,q}+\beta)}b_n+\frac{(p+2q-1)[n+m]_{p,q}}{[2]_{p,q}([n+1]_{p,q}+\beta)}x\right)}}
 \Biggl(
\dfrac{[2]_{p,q}\alpha+(p\frac{x}{b_n}+1-\frac{x}{b_n})_{p,q}^{n+m}}{[2]_{p,q}([n+1]_{p,q}+\beta)}b_n+\dfrac{(p+2q-1)[n+m]_{p,q}}{[2]_{p,q}([n+1]_{p,q}+\beta)}x
-u\Biggl)g''(u)\mathop{du}\Biggl|
\\&& \hspace{.1 cm}
\leq \Biggl|K_{n,m}^{*}\left(\int_x^y(y-u)g''(u) \mathop{du};x,p,q\right)\Biggl|+\Biggl|\int_x^{\footnotesize{\left(\frac{[2]_{p,q}\alpha+(p\frac{x}{b_n}+1-\frac{x}{b_n})_{p,q}^{n+m}}{[2]_{p,q}([n+1]_{p,q}+\beta)}b_n+\frac{(p+2q-1)[n+m]_{p,q}}{[2]_{p,q}([n+1]_{p,q}+\beta)}x\right)}}\\&& \hspace{2 cm}\times
 \Biggl(
\frac{[2]_{p,q}\alpha+(p\frac{x}{b_n}+1-\frac{x}{b_n})_{p,q}^{n+m}}{[2]_{p,q}([n+1]_{p,q}+\beta)}b_n+\frac{(p+2q-1)[n+m]_{p,q}}{[2]_{p,q}([n+1]_{p,q}+\beta)}x
-u\Biggl)g''(u)\mathop{du} \Biggl|.
\end{eqnarray*}
Since,
\begin{equation*}
\Bigl|K_{n,m}^{\alpha,\beta}\left(\int_x^y(y-u)g''(u) \mathop{du};x,p,q\right)\Bigl| \leq \|g''(x)\|~K_{n,m}^{\alpha,\beta}((y-x)^2;x,p,q)
\end{equation*}
and
\begin{eqnarray*}
&&\Biggl|\int_x^{\footnotesize{\left(\frac{[2]_{p,q}\alpha+(p\frac{x}{b_n}+1-\frac{x}{b_n})_{p,q}^{n+m}}{[2]_{p,q}([n+1]_{p,q}+\beta)}b_n+\frac{(p+2q-1)[n+m]_{p,q}}{[2]_{p,q}([n+1]_{p,q}+\beta)}x\right)}}
  \Biggl(
 \frac{[2]_{p,q}\alpha+(p\frac{x}{b_n}+1-\frac{x}{b_n})_{p,q}^{n+m}}{[2]_{p,q}([n+1]_{p,q}+\beta)}b_n+\frac{(p+2q-1)[n+m]_{p,q}}{[2]_{p,q}([n+1]_{p,q}+\beta)}x
-u\Biggl)g''(u)\mathop{du} \Biggl|\\&&\hspace{1 cm}\leq ~\|g''\| \Biggl[\dfrac{[2]_{p,q}\alpha+(p\frac{x}{b_n}+1-\frac{x}{b_n})_{p,q}^{n+m}}{[2]_{p,q}([n+1]_{p,q}+\beta)}b_n+\Biggl(\dfrac{(p+2q-1)[n+m]_{p,q}}{[2]_{p,q}([n+1]_{p,q}+\beta)}-1\Biggl)x  \Biggl]^2,
\end{eqnarray*}
we get
\begin{eqnarray*}
&&|K_{n,m}^{*}(g;x,p,q)-g(x)| \leq \|g''\|K_{n,m}^{\alpha,\beta}((y-x)^2;x,p,q)+\|g''\|\Biggl[\dfrac{[2]_{p,q}\alpha+(p\frac{x}{b_n}+1-\frac{x}{b_n})_{p,q}^{n+m}}{[2]_{p,q}([n+1]_{p,q}+\beta)}b_n\\&& \hspace{8 cm}+\Biggl(\dfrac{(p+2q-1)[n+m]_{p,q}}{[2]_{p,q}([n+1]_{p,q}+\beta)}-1\Biggl)x\Biggl]^2.
\end{eqnarray*}
Hence Lemma \ref{L2} implies that
\begin{eqnarray}\label{E4}
&&|K_{n,m}^{*}(g;x,p,q)-g(x)| \leq \|g''\|\Biggl[
\left(\dfrac{\alpha^2}{([n+1]_{p,q}+\beta)^2}+\dfrac{2\alpha}{[2]_{p,q}([n+1]_{p,q}+\beta)^2}\left(p\frac{x}{b_n}+1-\frac{x}{b_n}\right)_{p,q}^{n+m}+\dfrac{(p^2\frac{x}{b_n}+1-\frac{x}{b_n})_{p,q}^{n+m}}{[3]_{p,q}([n+1]_{p,q}+\beta)^2}\right)b_n^2 \nonumber\\&&\hspace{4.5 cm}
+\Biggl(\dfrac{2\alpha(p+2q-1)[n+m]_{p,q}}{[2]_{p,q}([n+1]_{p,q}+\beta)^2}+
\left\{ 1+\dfrac{2q}{[2]_{p,q}}+\dfrac{q^2-1}{[3]_{p,q}}\right\} \dfrac{[n+m]_{p,q}}{([n+1]_{p,q}+\beta)^2} \left(p\frac{x}{b_n}+1-\frac{x}{b_n}\right)_{p,q}^{n+m-1}\nonumber
\\&& \hspace{5.5 cm}-\dfrac{2\alpha}{([n+1]_{p,q}+\beta)} - \dfrac{2(p\frac{x}{b_n}+1-\frac{x}{b_n})_{p,q}^{n+m}}{[2]_{p,q}([n+1]_{p,q}+\beta)}
\Biggl)b_nx \nonumber \\&& \hspace{4.5 cm}+\Biggl(\Biggl\{ 1+\dfrac{2(q-1)}{[2]_{p,q}}+\dfrac{(q-1)^2}{[3]_{p,q}}\Biggl\}\dfrac{[n+m]_{p,q}[n+m-1]_{p,q}}{([n+1]_{p,q}+\beta)^2}-2 \dfrac{(p+2q-1)[n+m]_{p,q}}{[2]_{p,q}([n+1]_{p,q}+\beta)} +1 \Biggl)x^2
 \nonumber \\&& \hspace{4.5 cm}
+ \Biggl(\dfrac{[2]_{p,q}\alpha+(p\frac{x}{b_n}+1-\frac{x}{b_n})_{p,q}^{n+m}}{[2]_{p,q}([n+1]_{p,q}+\beta)}b_n+\Biggl(\dfrac{(p+2q-1)[n+m]_{p,q}}{[2]_{p,q}([n+1]_{p,q}+\beta)}-1\Biggl)x \Biggl)^2
 \Biggl].
\end{eqnarray}
Since $K_{n,m}^{*}(f;x,p,q) \leq 3\|f\|$, considering \ref{alpha} and \ref{beta},for all $f \in C_B[0,\infty)$ and $g \in C_B^2[0,\infty)$, we may write from \ref{E4} that
\begin{eqnarray*}
&&|K_{n,m}^{\alpha,\beta}(f;x,p,q)-f(x)|\leq |K_{n,m}^{*}(f-g;x,p,q)-(f-g)(x)|+|K_{n,m}^{*}(g;x,p,q)-g(x)|
\\&& \hspace{4 cm}+\Biggl|f\Biggl(\frac{[2]_{p,q}\alpha+(p\frac{x}{b_n}+1-\frac{x}{b_n})_{p,q}^{n+m}}{[2]_{p,q}([n+1]_{p,q}+\beta)}b_n+\frac{(p+2q-1)[n+m]_{p,q}}{[2]_{p,q}([n+1]_{p,q}+\beta)}x\Biggl)-f(x)\Biggl|
\\&& \hspace{4 cm}
\leq 4\|f-g\| + \alpha_{n,p,q}(x)\|g'\|+ \Biggl|f\Biggl(\frac{[2]_{p,q}\alpha+(p\frac{x}{b_n}+1-\frac{x}{b_n})_{p,q}^{n+m}}{[2]_{p,q}([n+1]_{p,q}+\beta)}b_n+\frac{(p+2q-1)[n+m]_{p,q}}{[2]_{p,q}([n+1]_{p,q}+\beta)}x\Biggl)-f(x)\Biggl|
\\&& \hspace{4 cm}
\leq 4\|f-g\| + \alpha_{n,p,q}(x)\|g'\|+ \omega(f,\beta_{n,p,q}(x)) ,
\end{eqnarray*}
which yields that
\begin{eqnarray*}
|K_{n,m}^{\alpha,\beta}(f;x,p,q)-f(x)|&\leq &4 K(f,\alpha_{n,p,q}(x))+\omega(f,beta_{n,p,q}(x))
\\&\leq& C\omega_2(f,\sqrt{\alpha_{n,p,q}(x)})+\omega(f,\beta_{n,p,q}(x)),
\end{eqnarray*}
where
\begin{eqnarray*}
&& \alpha_{n,p,q}(x)= \Biggl[\Biggl\{ 1+\dfrac{2(q-1)}{[2]_{p,q}}+\dfrac{(q-1)^2}{[3]_{p,q}}+\dfrac{(p+2q-1)^2}{[2]_{p,q}^2} \Biggl\}\dfrac{[n+m]_{p,q}^2}{([n+1]_{p,q}+\beta)^2}-4 \dfrac{(p+2q-1)[n+m]_{p,q}}{[2]_{p,q}([n+1]_{p,q}+\beta)}+2 \Biggl] x^2 
\\&& \hspace{2cm}+ \Biggl[
\left\{ 1+\dfrac{2q}{[2]_{p,q}}+\dfrac{q^2-1}{[3]_{p,q}}+2\dfrac{(p+2q-1)}{[2]_{p,q}^2}\right\} \dfrac{[n+m]_{p,q}}{([n+1]_{p,q}+\beta)^2}\left(p\frac{x}{b_n}+1-\frac{x}{b_n}\right)_{p,q}^{n+m}\\ && \hspace{3 cm}
+4\dfrac{\alpha (p+2q-1)[n+m]_{p,q}}{[2]_{p,q}([n+1]_{p,q}+\beta)^2}-4\dfrac{(p\frac{x}{b_n}+1-\frac{x}{b_n})_{p,q}^{n+m}}{[2]_{p,q}([n+1]_{p,q}+\beta)}-4\dfrac{\alpha}{([n+1]_{p,q}+\beta)}
\Biggl]b_nx \\&& \hspace{2 cm}
+ \Biggl[ \dfrac{(p^2\frac{x}{b_n}+1-\frac{x}{b_n})_{p,q}^{n+m}}{[3]_{p,q}}+\dfrac{(p\frac{x}{b_n}+1-\frac{x}{b_n})_{p,q}^{2n+2m}}{[2]_{p,q}^2}+4\dfrac{\alpha}{[2]_{p,q}}(p\frac{x}{b_n}+1-\frac{x}{b_n})_{p,q}^{n+m}+2\alpha^2\Biggl]\dfrac{b_n^2}{([n+1]_{p,q}+\beta)^2},
\end{eqnarray*}
and
\begin{equation*}
\beta_{n,p,q}(x)=\Biggl(\dfrac{[2]_{p,q}\alpha+(p\frac{x}{b_n}+1-\frac{x}{b_n})_{p,q}^{n+m}}{[2]_{p,q}([n+1]_{p,q}+\beta)}b_n+\Biggl(\dfrac{(p+2q-1)[n+m]_{p,q}}{[2]_{p,q}([n+1]_{p,q}+\beta)}-1\Biggl)x \Biggl).
\end{equation*}
Hence we get the result.
\end{proof}
\begin{flushleft}
\textbf{Conflict of Interest} The authors declare that there is no conflict of interests.
\end{flushleft}
\begin{flushleft}
\textbf{References}
\end{flushleft}

\end{document}